\documentclass[a4paper,11pt]{article}
\usepackage{mathrsfs}
\usepackage{latexsym}
\usepackage{amsmath,amssymb}
\usepackage[pdftex]{hyperref}
\usepackage{amsthm}
\usepackage{amsfonts,cite}
\usepackage[usenames]{color}
\usepackage{amssymb}
\usepackage{graphicx}
\usepackage{amsmath}
\usepackage{amsfonts}
\usepackage{amsthm}
\usepackage{mathrsfs}
\usepackage{dsfont}
\usepackage{indentfirst}

\ExecuteOptions{dvips} \marginparwidth 0pt \oddsidemargin 0.5 truecm
\evensidemargin 0.5 truecm \marginparsep 0pt \topmargin -25pt
\textheight 22 truecm \textwidth 15.0 truecm

\newtheoremstyle{mythm}{1.5ex plus 1ex minus .2ex}{1.5ex plus 1ex
minus .2ex}{\kai}{\parindent}{\song\bfseries}{}{1em}{}
\numberwithin{equation}{section}

\newcommand{\be}{\begin{equation}}
\newcommand{\ee}{\end{equation}}

\newcommand{\ptl}{\partial}

\newcommand{\eps}{\varepsilon}

\newcommand{\lam}{\lambda}
\numberwithin{equation}{section}

\newtheorem{lemma}{Lemma}[section]
\newtheorem{rem}{Remark}
\newtheorem{thm}{Theorem}[section]
\newtheorem{pro}{Proposition}[section]

\allowdisplaybreaks[4]
\begin{document}
\title{{\textbf{Classification of solutions of higher order critical Choquard equation}}}
\author{Genggeng Huang\footnote{genggenghuang@fudan.edu.cn} and Yating Niu\footnote{ytniu19@fudan.edu.cn}}
\date{}
\maketitle
\begin{center}
School of Mathematical Sciences, Fudan University, Shanghai, China
\end{center}

\begin{abstract}
In this paper, we classify the solutions of the following critical Choquard equation
\[
(-\Delta)^{\frac{n}{2}} u(x) = \int_{\mathbb{R}^n} \frac{e^{\frac{2n- \mu}{2}u(y)}}{|x-y|^{\mu}}dy e^{\frac{2n- \mu}{2}u(x)}, \ \text{in} \ \mathbb{R}^n,
\]
where $ 0<\mu < n$, $ n\ge 2$. Suppose $ u(x) = o(|x|^2) \ \text{at} \ \infty $ for $ n \geq 3$ and satisfies
\begin{equation}
\int_{\mathbb{R}^n}e^{\frac{2n- \mu}{2}u(y)} dy < \infty, \ \int_{\mathbb{R}^n}\int_{\mathbb{R}^n}\frac{e^{\frac{2n- \mu}{2}u(y)}}{|x-y|^{\mu}} e^{\frac{2n- \mu}{2}u(x)} dy dx < \infty. \tag{B}
\end{equation}
By using the method of moving spheres, we show that the solutions have the following form
\[
u(x)= \ln  \frac{C_1(\eps)}{|x-x_0|^2 + \eps^2}.
\]
\end{abstract}
\section{Introduction}
In this paper, we classify the solutions of the following Choquard equation
\begin{equation}\label{c1}
(-\Delta)^{\frac{n}{2}} u(x) = \int_{\mathbb{R}^n} \frac{e^{\frac{2n- \mu}{2}u(y)}}{|x-y|^{\mu}}dy e^{\frac{2n- \mu}{2}u(x)}, \ \text{in} \ \mathbb{R}^n,
\end{equation}
where $ 0<\mu < n $ and $ n\ge 2$. When $ n$ is odd, the nonlocal fractional Laplacians $ (-\Delta)^{\frac{n}{2}}$ are defined by
\[
(-\Delta)^{\frac{n}{2}} u = (-\Delta)^{\frac{1}{2}} \circ (-\Delta)^{\frac{n-1}{2}}u(x).
\]
Interested readers may refer to the book of Chen et al. \cite{ChenLiMa}. In order to insure the problem is well-defined, we assume throughout this paper that for a $ C^n$ solution $ u$ of \eqref{c1}, the following assumption
\begin{equation}\label{c2}
\int_{\mathbb{R}^n}e^{\frac{2n- \mu}{2}u(y)} dy < \infty, \ \int_{\mathbb{R}^n}\int_{\mathbb{R}^n}\frac{e^{\frac{2n- \mu}{2}u(y)}}{|x-y|^{\mu}} e^{\frac{2n- \mu}{2}u(x)} dy dx < \infty \tag{B}
\end{equation}
holds true.

Consider the conformally flat manifolds $(M,g)=(\mathbb R^n, e^{2u}|dx|^2)$, where $n\ge 2$ is an even integer. Then the $Q-$curvature $Q_g$ satisfies
\begin{equation}\label{0905}
(-\Delta)^{\frac n2} u=Q_g e^{nu},\quad \text{in}\quad \mathbb R^n.
\end{equation}
For more details of $Q-$curvatures, we refer readers to \cite{Branson1985,GJMS1992}. For \eqref{c1}, we can view
$Q_g=e^{-\frac{\mu}2 u(x)}\int_{\mathbb R^n}\frac{e^{\frac{2n-\mu}{2}u(y)}}{|x-y|^\mu}dy$ as the corresponding $Q-$curvature.
Since the volume form $dvol_g=e^{nu} dx$,
the first condition in  \eqref{c2} is the finite volume condition with weight $e^{-\frac{\mu}{2}u}$ and the second condition in  \eqref{c2} is the finite total curvature condition.

The Choquard equation
\begin{equation}\label{c9}
-\Delta u + V(x)u =2 \left( \frac{1}{|x|} \ast |u|^{2}\right) u, \ x \in \mathbb{R}^n
\end{equation}
has various applications in mathematical physics, such as quantum mechanics (see for \cite{Pekar}). Mathematically, Lieb \cite{Lieb} proved the existence and uniqueness of the minimal energy solution of \eqref{c9} by using rearrangement technique, when $ V$ is a positive constant. For $ V\equiv 1$, Ma and Zhao \cite{MaZhao} proved the positive solutions of \eqref{c9} must be radially symmetric and monotone decreasing about some fixed point. For more results on Choquard equations, see for \cite{GaoYang,Lions,MVVJ}.

For the critical Choquard equation
\[
-\Delta u = \left( \frac{1}{|x|^\mu} \ast u^{2_\mu^*}\right) u^{2_\mu^*-1}, \ x \in \mathbb{R}^n, \ n\ge 3,
\]
Du and Yang \cite{DuYang19} applied the moving plane method to classify the positive solution of the nonlocal equation, where $ 2_\mu^* = \frac{2n-\mu}{n-2}$, $ 0< \mu < n$, if $ n=3$ or $ 4$ and $ 0< \mu \le 4$, if $ n\ge 5$. $ 2_\mu^*$ is the upper critical exponent in the sense of the Hardy-Littlewood-Sobolev inequality. Guo et al. \cite{GuoHuPeng} proved the same result and they also studied the nonlinear Choquard equation. For multi-critical Choquard equations, please refer to \cite{LiuYangYu}.

In \cite{Yu}, Yang and Yu consider the following Choquard equation in dimension two
\begin{equation}\label{c5}
-\Delta u(x) = \int_{\mathbb{R}^2} \frac{e^{\frac{4-\mu}{2}u(y)}}{|x-y|^{\mu}} dy e^{\frac{4-\mu}{2}u(x)} \ \text{in} \ \mathbb{R}^2,
\end{equation}
where $ 0< \mu <1$. They classified the solution of the above equation \eqref{c5} under the assumptions
\[
\int_{\mathbb{R}^2} e^{\frac{4-\mu}{2}u(y)} dy < \infty, \quad \int_{\mathbb{R}^2}\int_{\mathbb{R}^2} \frac{e^{\frac{4-\mu}{2}u(y)}e^{\frac{4-\mu}{2}u(x)}}{|x-y|^{\mu}} dydx< \infty.
\]

The natural generalizations of \eqref{c5} is the higher order critical Choquard equation \eqref{c1}. We rewrite \eqref{c1} into the following differential-integral system
\begin{equation}\label{c6}
\left\{
\begin{aligned}
& (-\Delta)^{\frac{n}{2}} u(x) = v(x) e^{\frac{2n- \mu}{2}u(x)} \quad \text{in} \quad  \mathbb{R}^n, \\
& v(x) = \int_{\mathbb{R}^n} \frac{e^{\frac{2n- \mu}{2}u(y)}}{|x-y|^{\mu}}dy \quad  \text{in} \quad  \mathbb{R}^n. 
\nonumber
\end{aligned}
\right.
\end{equation}

Now let us consider the following integral equation
\begin{equation}\label{c7}
u(x) = \frac{1}{\beta_n}  \int_{\mathbb{R}^n}\left[ \ln \left(\frac{|y|+1}{|x-y|}\right)\right] v(y) e^{\frac{2n-\mu}{2}u(y)} dy + C,
\end{equation}
where
\[ v(x) = \int_{\mathbb{R}^n} \frac{e^{\frac{2n- \mu}{2}u(y)}}{|x-y|^{\mu}}dy \quad \text{and} \quad \beta_n = 2^{n-1} \left(\frac{n}{2}\right)! \left(\frac{n}{2} -1\right)! \omega_n.
\]
$ \omega_n$ is the volume of the unit ball in $ \mathbb{R}^n$. We establish the equivalence between differential equation \eqref{c1} and integral equation \eqref{c7}.
\begin{thm}\label{c24}
Suppose $ u \in C^n$ is a solution of \eqref{c1} which satisfies \eqref{c2} and
\begin{equation}\label{c3}
u(x) = o(|x|^2) \ \text{at} \ \infty \quad \text{for} \quad n\geq 3.
\end{equation}
Then $ u$ also satisfies \eqref{c7}.
\end{thm}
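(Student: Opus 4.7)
The plan is to introduce
\begin{equation*}
w(x) := \frac{1}{\beta_n} \int_{\mathbb{R}^n} \ln\frac{|y|+1}{|x-y|}\, v(y)\, e^{\frac{2n-\mu}{2}u(y)}\, dy,
\end{equation*}
show it is well-defined with at worst logarithmic growth at infinity, verify $(-\Delta)^{n/2} w = v\, e^{\frac{2n-\mu}{2}u}$, and then identify $u-w$ with a constant via a Liouville-type argument exploiting the growth hypothesis \eqref{c3}. Formula \eqref{c7} then follows by choosing $C := u(0) - w(0)$ (or whatever constant the argument produces).

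For the first two steps, I would first observe that assumption \eqref{c2} forces $v\, e^{\frac{2n-\mu}{2}u} \in L^1(\mathbb{R}^n)$, so splitting $\ln((|y|+1)/|x-y|) = \ln(|y|+1) - \ln|x-y|$ over the regions $\{|y-x|\le 1\}$, $\{|y-x|>1,\, |y|\le 2|x|+1\}$ and $\{|y|\ge 2|x|+1\}$ shows that $w(x)$ converges for every $x \in \mathbb{R}^n$; refining these estimates yields
\begin{equation*}
w(x) = -\alpha\ln|x| + O(1) \quad \text{as } |x|\to\infty, \qquad \alpha = \tfrac{1}{\beta_n}\int v\, e^{\frac{2n-\mu}{2}u}\,dy,
\end{equation*}
and in particular $w(x) = o(|x|^2)$. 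For the PDE identity, the key fact is that $-\beta_n^{-1}\ln|x-y|$ is the fundamental solution of $(-\Delta)^{n/2}$ on $\mathbb{R}^n$---classical for even $n$, and for odd $n$ obtained by composing $(-\Delta)^{(n-1)/2}$ with the singular-integral representation of $(-\Delta)^{1/2}$ as in \cite{ChenLiMa}. The auxiliary factor $\ln(|y|+1)$ depends only on $y$, so it is annihilated by $(-\Delta)^{n/2}_x$ and is included solely to enforce convergence at infinity. Justifying differentiation under the integral sign uses the local uniform bounds produced in the convergence step together with the integrability of $v\, e^{\frac{2n-\mu}{2}u}$.

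The main obstacle is the concluding Liouville step. Setting $h := u - w$, one has $(-\Delta)^{n/2} h = 0$ in $\mathbb{R}^n$ with $h(x) = o(|x|^2)$ at infinity, by \eqref{c3} together with $w(x) = O(\ln|x|)$. An $n/2$-order Liouville theorem---classical polyharmonic theory when $n$ is even, and the fractional analogue via the Caffarelli-Silvestre extension or the structural results in \cite{ChenLiMa} when $n$ is odd---then forces $h$ to be a polynomial of degree at most $1$, say $h(x) = a\cdot x + b$. To rule out $a\ne 0$, note that $u(x) = w(x) + a\cdot x + b$ with $w$ only logarithmic would give $u(x) \to +\infty$ linearly on the half-space $\{a\cdot x > 0\}$, contradicting the finite-volume condition $\int e^{\frac{2n-\mu}{2}u(x)}\,dx < \infty$ in \eqref{c2}. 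Hence $a = 0$, $h \equiv b =: C$, and \eqref{c7} follows.
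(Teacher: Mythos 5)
Your overall strategy matches the paper's: define the log-potential $w$ (the paper's $J$ in \eqref{c56}), verify that $(-\Delta)^{n/2}w$ reproduces the right-hand side, and then argue that $u-w$ must be a constant by a Liouville argument using the growth hypothesis and the integrability in \eqref{c2}. The establishment of $w(x)/\ln|x|\to -\alpha$ and ruling out the linear term at the end via the finite-volume condition also follow the paper (Lemma \ref{c68} and the end of Theorem \ref{c28}). The difference, and where the gaps lie, is the Liouville step itself.

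You invoke a Liouville theorem for $(-\Delta)^{n/2}$ with data $h=u-w=o(|x|^2)$ more or less as a black box. There are two concrete problems with this. First, for $n=2$ the hypothesis \eqref{c3} is \emph{not} assumed (it is stated only for $n\ge 3$), so you do not have $h=o(|x|^2)$ from the given hypotheses; the paper instead uses the uniform upper bound $u\le C$ from Lemma \ref{c25} together with the lower bound $J\ge -\alpha\ln|x|$ to get the one-sided estimate $\liminf_{|x|\to\infty}(J-u)/|x|^2\ge 0$, which is enough for the harmonic Liouville theorem. Second, for odd $n$ you appeal to a fractional Liouville theorem for the half-integer power $n/2\ge 3/2$ via Caffarelli--Silvestre; this is not a routine citation (the extension technology is built for $s\in(0,1)$, and the relevant statement for higher half-integer powers with $o(|x|^2)$ growth would need to be stated and proved). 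The paper sidesteps this entirely by setting $p=[\frac{n+1}{2}]$, observing $(-\Delta)^p(u-J)=0$ with $p$ an integer, and working locally.

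Even for even $n\ge 4$, the statement ``$\Delta^{p}h=0$, $h=o(|x|^2)\Rightarrow h$ affine'' is true but is not quite a textbook theorem you can cite and move on; it requires the scaling interior estimates $|\Delta^{k}h(x)|\le C|x|^{-2k}\sup_{B_{|x|/2}(x)}|h|$ and an induction in $k$, which you should spell out. The paper instead proves this via a different mechanism that is worth appreciating: it derives a Pizzetti-type mean-value expansion $P(r)$ (equation \eqref{c14}), obtains the \emph{sign} $(\Delta)^{p-1}(u-J)\le 0$ by applying Jensen's inequality to $e^{\theta P(r)}$ and using the finite-volume condition $\int e^{\frac{2n-\mu}{2}u}<\infty$ (so $r^{-\gamma}e^{\gamma'P(r)}\in L^1(1,\infty)$ forces the leading coefficient nonpositive), and then uses $\Delta J\le 0$ together with $u=o(|x|^2)$ to get the reverse sign. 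Thus the integrability condition plays a structural role in the paper's Liouville step itself, not only in eliminating the linear term at the very end as in your proposal. Your proposal would be salvageable for even $n\ge 4$ with the interior-estimate argument written out, but the $n=2$ and odd-$n$ cases need the ideas above to be complete.
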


\begin{thm}\label{c61}
Suppose $ u \in C^n$ is a solution of \eqref{c1} which satisfies \eqref{c2} and \eqref{c3}. Then we have
\[
u(x) = -\alpha \ln |x| + O(1)
\]
for $ |x|$ large enough, where
\[
\alpha := \frac{1}{\beta_n} \int_{\mathbb{R}^n} v(y) e^{\frac{2n- \mu}{2}u(y)} dy >\frac{2n}{2n-\mu}
\]
and
\[
\lim_{|x|\rightarrow \infty} |x|^{\mu} v(x) = \beta,
\]
where
\[
\beta = \int_{\mathbb{R}^n}e^{\frac{2n- \mu}{2}u(y)} dy.
\]
\end{thm}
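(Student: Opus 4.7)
The plan is to start from the integral representation of Theorem~\ref{c24},
\[
u(x) = \frac{1}{\beta_n}\int_{\mathbb{R}^n} \ln\frac{|y|+1}{|x-y|}\, w(y)\, dy + C,
\]
where $w(y) := v(y) e^{\frac{2n-\mu}{2}u(y)}$ lies in $L^1(\mathbb{R}^n)$ by assumption (B). The leading logarithm is extracted by splitting
\[
\ln\frac{|y|+1}{|x-y|} = -\ln|x| + \ln\frac{(|y|+1)|x|}{|x-y|},
\]
so that $u(x) + \alpha\ln|x| = C + R(x)$ with
\[
R(x) := \frac{1}{\beta_n}\int_{\mathbb{R}^n} \ln\frac{(|y|+1)|x|}{|x-y|}\, w(y)\, dy.
\]
The heart of the proof is to show $R(x)=O(1)$ as $|x|\to\infty$.

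To estimate $R(x)$, I would partition $\mathbb{R}^n$ into four pieces: $A_1 = \{|y|\le |x|/2\}$, $A_2 = \{|y-x|\le 1\}$, $A_3 = \{|y-x|\ge 1,\ |x|/2 \le |y| \le 2|x|\}$, and $A_4 = \{|y|\ge 2|x|\}$. On $A_1$ the factor $|x-y|$ is comparable to $|x|$, so the kernel is of order $\ln(2+|y|/|x|)$ for $|y|\ll|x|$ and at most $\ln|x|$ for $|y|$ near $|x|/2$; a truncation at a large radius together with absolute continuity of $\int w$ controls both sub-regions. On $A_4$ the sides $|x-y|$ and $|y|$ are comparable, making the kernel uniformly bounded, so this contribution is $O(\|w\|_{L^1})$. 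On $A_2$ the logarithmic singularity of $\ln|x-y|$ is locally integrable in $\mathbb{R}^n$, while $w$ is locally bounded thanks to the $C^n$-regularity of $u$ and the continuity of $v$. The genuinely delicate piece is $A_3$, where both the kernel (size $\sim\ln|x|$) and $w$ can be nontrivial, and the naive bound gives only $O(\ln|x|)$. I would close this gap by a short bootstrap: first establish a one-sided bound $u(x)\le -\alpha\ln|x| + O(1)$ by estimating only the positive part of $R(x)$, convert this into polynomial decay $w(y)\lesssim |y|^{-n-\delta}$ for some $\delta>0$, and finally use this decay to show that the $A_3$-integral is indeed $O(1)$.

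Once $u(x) = -\alpha\ln|x|+O(1)$ is in hand, the strict lower bound $\alpha>\frac{2n}{2n-\mu}$ is immediate: if $\alpha\le\frac{2n}{2n-\mu}$ then $e^{\frac{2n-\mu}{2}u(y)}\gtrsim |y|^{-n}$ at infinity, contradicting $\beta=\int e^{\frac{2n-\mu}{2}u}\,dy<\infty$. For the asymptotic of $v$ I would write
\[
|x|^\mu v(x) = \int_{\mathbb{R}^n}\left(\frac{|x|}{|x-y|}\right)^{\!\mu}e^{\frac{2n-\mu}{2}u(y)}\,dy
\]
and apply dominated convergence: for each fixed $y$ the integrand converges pointwise to $e^{\frac{2n-\mu}{2}u(y)}$, while the decay $e^{\frac{2n-\mu}{2}u(y)}\lesssim |y|^{-\alpha(2n-\mu)/2}$ with $\alpha(2n-\mu)/2>n$ provides both a dominating function on $\{|y|\le|x|/2\}$ and vanishing tails on $\{|y|\ge|x|/2\}$.

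The principal obstacle will be the annular region $A_3$: the $\ln|x|$ factor in the kernel forces one to exploit additional decay of $w$, and that decay is available only after some version of the very asymptotic being proved. The short bootstrap sketched above is the cleanest way I see to break this apparent circularity; all other steps are then relatively routine splittings and applications of dominated convergence.
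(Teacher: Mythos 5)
Your overall strategy — integral representation, split off the $-\alpha\ln|x|$ term, control the remainder by decomposing $\mathbb{R}^n$, and bootstrap from weak decay to the sharp $O(1)$ bound — is the same as the paper's. But the bootstrap as you sequence it is circular. You propose first to establish $u(x)\le -\alpha\ln|x|+O(1)$ ``by estimating only the positive part of $R(x)$''; that estimate, however, only gives $u(x)\le -\alpha\ln|x|+o(\ln|x|)$, because on the annulus $|x|/2\le|y|\le 2|x|$ and on $\{|y|\ge 2|x|\}$ the positive part of the kernel is of size $\sim\ln|x|$ and you can only pair it with the smallness of the $L^1$-tail of $w$, yielding $o(\ln|x|)$. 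Next you want to convert the upper bound into $w(y)\lesssim|y|^{-n-\delta}$, but that conversion requires $\tfrac{(2n-\mu)\alpha}{2}>n$, i.e. the strict inequality $\alpha>\tfrac{2n}{2n-\mu}$ — which you list as a consequence of the $O(1)$ bound, not a prerequisite. So the chain ``$O(1)$ bound $\Rightarrow\ \alpha>\tfrac{2n}{2n-\mu}$'' is backwards.

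The fix is exactly the paper's ordering. (i) The easy lower bound $J(x)\ge -\alpha\ln|x|$ (so $u(x)\ge -\alpha\ln|x|-C$) follows from the kernel inequality $\ln\frac{|y|+1}{|x-y|}\ge-\ln|x|$ and requires nothing else; combined with $\int e^{\frac{2n-\mu}{2}u}<\infty$ it immediately forces $\alpha>\tfrac{2n}{2n-\mu}$ — \emph{before} you have any upper bound. (ii) A matching $o(\ln|x|)$ upper bound gives $\lim u(x)/\ln|x|=-\alpha$ (this is the paper's Lemma~\ref{c68}). (iii) Now $u(y)\le(-\alpha+\varepsilon)\ln|y|+C_\varepsilon$ together with $\alpha>\tfrac{2n}{2n-\mu}$ yields $w(y)=v(y)e^{\frac{2n-\mu}{2}u(y)}\lesssim|y|^{-n-\delta}$ (using also $v\le C$ from Lemma~\ref{c25}), and this decay is what closes the annular estimate and upgrades the remainder to $O(1)$. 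Your treatment of $|x|^\mu v(x)\to\beta$ by dominated convergence is fine and matches the paper once the decay of $e^{\frac{2n-\mu}{2}u}$ is available; it is only the sequencing of the bootstrap that needs to be corrected.
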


With the aids of the equivalence of differential equation \eqref{c1} and integral equation \eqref{c7} and the asymptotic behavior of $ (u,v)$, we deduce the Liouville type theorem for solutions $ u$ to \eqref{c1}.

\begin{thm}\label{c4}
Assume that $ u \in C^n$ is a solution to \eqref{c1} and satisfies the hypothesis \eqref{c2} and \eqref{c3}. Then for $ n\geq 2$ and $ \mu \in(0,n)$, $ u$ has the following form
\[
u(x)= \ln  \frac{C_1(\eps)}{|x-x_0|^2 + \eps^2},
\]
where $ C_1$ is a positive constant depending only on $ \eps$ and $ x_0$ is a fixed point in $ \mathbb{R}^n$.
\end{thm}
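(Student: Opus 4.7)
The plan is to apply the method of moving spheres to the integral representation \eqref{c7}, which by Theorem \ref{c24} is equivalent to \eqref{c1}, using the asymptotic description of $(u,v)$ supplied by Theorem \ref{c61} to drive the process. For each $x_0 \in \mathbb{R}^n$ and $\lam>0$, set $x^\lam := x_0 + \lam^2(x-x_0)/|x-x_0|^2$ and introduce the Kelvin-type transforms
\[
u_{x_0,\lam}(x) = u(x^\lam) + 2\ln\frac{\lam}{|x-x_0|}, \qquad v_{x_0,\lam}(x) = \left(\frac{\lam}{|x-x_0|}\right)^{\!\mu} v(x^\lam).
\]
Using the conformal identity $|x^\lam - y^\lam| = \lam^2|x-y|/(|x-x_0||y-x_0|)$ and changing variables, one verifies that $v_{x_0,\lam}(x) = \int_{\mathbb{R}^n}|x-y|^{-\mu}e^{\frac{2n-\mu}{2}u_{x_0,\lam}(y)}\,dy$, so the transformed pair satisfies the same integral system as $(u,v)$. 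Subtracting \eqref{c7} for $(u,v)$ from the analogous identity for $(u_{x_0,\lam},v_{x_0,\lam})$ yields, on $\Sigma_\lam := \{x : |x-x_0|>\lam\}$, an integral identity for $w_\lam := u - u_{x_0,\lam}$ with nonnegative kernel and right-hand side driven monotonically by $w_\lam$ through the nonlinearity $v\,e^{\frac{2n-\mu}{2}u}$.

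The argument then proceeds in three standard steps. \emph{Initialization}: using the $C^n$ regularity of $u$ near $x_0$ together with \eqref{c2}, show that $w_\lam \ge 0$ on $\Sigma_\lam$ for all $\lam \in (0,\lam_0(x_0)]$ with some $\lam_0(x_0)>0$, and set $\bar\lam(x_0) := \sup\{\lam>0 : w_\mu \ge 0 \text{ on } \Sigma_\mu \text{ for all } 0<\mu\le\lam\}$. \emph{Stopping}: if $\bar\lam(x_0)=+\infty$, pick any $y \ne x_0$ and note that $x_\lam := x_0 + \lam^2(y-x_0)/|y-x_0|^2 \in \Sigma_\lam$ once $\lam > |y-x_0|$; inserting the asymptotics $u(x_\lam) = -2\alpha\ln\lam + \alpha\ln|y-x_0| + O(1)$ from Theorem \ref{c61} into $u(x_\lam) \ge u_{x_0,\lam}(x_\lam) = u(y) + 2\ln(|y-x_0|/\lam)$ produces $(2-2\alpha)\ln\lam \ge u(y) + (2-\alpha)\ln|y-x_0| + O(1)$, which as $\lam\to\infty$ contradicts $\alpha>1$; the latter is guaranteed by $\alpha>2n/(2n-\mu)>1$ because $\mu>0$. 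Hence $\bar\lam(x_0) < \infty$ for every $x_0$. \emph{Rigidity}: if $w_{\bar\lam}$ were not identically zero on $\Sigma_{\bar\lam}$, positivity of the kernel together with a small perturbation would allow enlarging $\bar\lam$, contradicting maximality; therefore $u \equiv u_{x_0,\bar\lam(x_0)}$ for every $x_0$.

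Invariance of $u$ under a Kelvin transform of some positive radius around every $x_0$ allows the classical calculus lemma of Li and Zhu to be invoked, forcing
\[
u(x) = \ln\frac{C_1(\eps)}{|x-x_0|^2+\eps^2}
\]
for some $\eps>0$ and $x_0\in\mathbb{R}^n$, which is the claim. The main technical obstacle I anticipate is the rigidity step: because the Choquard nonlinearity $v\,e^{\frac{2n-\mu}{2}u}$ couples values of $u$ over all of $\mathbb{R}^n$, one must show that strict inequality of $w_{\bar\lam}$ at a single point of $\Sigma_{\bar\lam}$ propagates, via the nonnegative kernel, to a uniform lower bound on a suitable compact set, and then combine this with the decay of $u$ and $v$ at infinity to construct a genuine perturbation that extends the moving sphere beyond $\bar\lam(x_0)$.
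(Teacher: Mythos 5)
Your outline captures the broad architecture — use the integral representation, run moving spheres, invoke a Li--Zhu/Li--Zhang calculus lemma — but it skips the step the paper treats as its central technical point: you never establish that the decay exponent $\alpha$ equals exactly $2$. You only invoke $\alpha>2n/(2n-\mu)>1$, which is what Theorem~\ref{c61} gives, and that suffices for your ``stopping'' argument but not for ``initialization.'' Concretely, on $\Sigma_\lambda$ the behaviour of $w_\lambda=u-u_{x_0,\lambda}$ at infinity is
\[
w_\lambda(x)=u(x)-u\!\left(x_0+\tfrac{\lambda^2(x-x_0)}{|x-x_0|^2}\right)-2\ln\tfrac{\lambda}{|x-x_0|}=(2-\alpha)\ln|x|+O(1),
\]
so if $\alpha>2$ then $w_\lambda\to-\infty$ as $|x|\to\infty$ for every $\lambda>0$, and the moving-spheres process cannot even be started; your appeal to ``$C^n$ regularity near $x_0$ together with (B)'' controls the inner boundary but says nothing about infinity. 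The a priori bound from Theorem~\ref{c61} does not exclude $\alpha>2$. The paper resolves precisely this by proving $\alpha=2$ before running the spheres: Lemma~\ref{c58} is a Pohozaev-type identity $\alpha(\alpha-\tfrac{4n}{2n-\mu})=\tfrac{4}{(2n-\mu)\beta_n}\int\langle x,\nabla v\rangle e^{\frac{2n-\mu}{2}u}\,dx$, and Lemma~\ref{c65} evaluates the right-hand side (with delicate handling of $\nabla v$ in the regime $n-1\le\mu<n$, where the naive differentiated integral for $v$ diverges) to close it to $\alpha=2$. This is what makes $u(x)=-2\ln|x|+O(1)$ available, and it is the hypothesis the paper's Proposition~\ref{c39} explicitly uses in the region $B_{R_0}\setminus\{0\}$ to launch the spheres.

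Two smaller remarks. First, the paper runs the spheres on the Kelvin-transformed pair $(p,q)$ on the bounded punctured ball $B_\lambda\setminus\{0\}$, starting from $\lambda=\infty$ and decreasing, whereas you work directly with $u$ on the unbounded exterior $\Sigma_\lambda$ with $\lambda$ increasing from $0$; these are dual, but the paper's choice keeps the comparison region bounded, with $p$ continuous at $0$ precisely because $\alpha=2$, which simplifies the maximum-principle/Hopf-lemma arguments used in the rigidity step (Proposition~\ref{c47}). Second, for the final identification the paper does not invoke the ``Kelvin-invariant $\Rightarrow$ bubble'' form of the calculus lemma directly: it uses the Li--Zhang lemma only to guarantee $\lambda_b>0$ for some $b$ (Proposition~\ref{c46}), and then obtains the explicit form by a short Taylor-expansion computation from the exact Kelvin invariance at $\lambda_b$. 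Your sketch of the rigidity step is hand-waved, as you acknowledge, but that part is repairable along the lines of Proposition~\ref{c47}; the missing $\alpha=2$ is the substantive gap.
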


\begin{rem}
For the case $ n=2$ and $ 0< \mu<1$, Yang and Yu \cite{Yu} proved Theorem \ref{c4} and gave the decay of $ u$ at $ \infty$ without the condition \eqref{c3}. Actually, the same results are also true for $ 0< \mu <2$.
\end{rem}

Theorem \ref{c4} is proved by the moving spheres method, which is a variant of the moving planes method. It is a very powerful tool to study the symmetry of solutions. Many classification results were obtained, see for instance \cite{ChenLi91,ChenLi09,ChenLiOu,HuangNiu,Yu22}.

Theorem \ref{c4} generalizes the result of \cite{Yu} to higher order equation. Firstly, in \cite{Yu}, they used the moving spheres method to rule out both slow decay and fast decay, then obtained the exact decay of the solution $ u$. In this paper, inspired by \cite{Xu05}, we use the Pohozaev identity and give the decay of the solution $ u$ directly. We show preliminarily that
\[
\lim_{|x| \rightarrow \infty} \frac{u(x)}{ \ln |x|} = -\alpha \quad \text{and} \quad \alpha >\frac{2n}{2n-\mu}.
\]
However, the proof of the exact value $ \alpha =2$ is more complicated. Since the corresponding $ v(x)$ in \cite{Xu05} is a constant. We have made some improvements to their methods. Secondly, Yang and Yu \cite{Yu} classified the solutions of \eqref{c5} in dimension two for $ 0<\mu <1$. In this paper, we generalize the dimension from $ 2$ to $ n$ and the index $ \mu$ to $ 0<\mu <n$. The main difficulty is the integral definition of $ \nabla v(x)$ is not well-defined for $ n-1\le \mu <n$. We use techniques such as integration by parts, cut off function and integral estimation to deal with this problem, see Lemma \ref{c65}.

If $ \mu =0$, the Choquard equation \eqref{c1} formally  reduces to
\begin{equation}\label{c59}
\left\{
\begin{aligned}
& (-\Delta)^{\frac{n}{2}} u =(n-1)!e^{nu} \quad \text{in} \quad \mathbb{R}^n,\\
& \int_{\mathbb{R}^n} e^{nu(x)}dx < \infty \quad \text{and} \quad u=o(|x|^2) \ \text{at} \ \infty.
\end{aligned}
\right.
\end{equation}
Zhu \cite{Zhu} obtained the Liouville theorems to \eqref{c59} with $ n=3$. For $ n=4$, Lin \cite{Lin98} classified the fourth order equation of \eqref{c59}. In the case that $ n$ is an even integer, Wei and Xu \cite{XuWei} worked out the explicit form of the solution from \eqref{c59}. For all dimensions $ n \ge 3$, Xu \cite{Xu05} gave the equivalent integral form and the classification of the solution by moving spheres method.  For more literatures on higher order equations, please refer to \cite{DaiLiuQin,HuangLi,LiM1}.

This paper is organized as follows. In Section 2, we give the integral representation formula for $ u$. And we obtain the asymptotic behavior of $ u$ and $ v$ at $ \infty$. In Section 3, we use the moving spheres method to prove Theorem \ref{c4}.
\section{Preliminaries}
In this section, we first establish the equivalence between the differential equation \eqref{c1} and integral equation \eqref{c7}. Furthermore, we study the decay of solution $ u$ at $ \infty$.

\begin{lemma}\label{c25}
Suppose $ u \in C^2(\mathbb{R}^n)$ such that \\
$(a)$
\[
\alpha := \frac{1}{\beta_n} \int_{\mathbb{R}^n} v(y) e^{\frac{2n- \mu}{2}u(y)} dy < \infty \qquad \int_{\mathbb{R}^n}e^{\frac{2n- \mu}{2}u(y)} dy < \infty,
\]
where
\[
v(x) = \int_{\mathbb{R}^n} \frac{e^{\frac{2n- \mu}{2}u(y)}}{|x-y|^{\mu}}dy;
\]
$ (b)$ $ u$ satisfies the following equation:
\[
\Delta u(x) + v(x) e^{\frac{4- \mu}{2}u(x)} = 0 \quad \text{for} \quad n=2,
\]
\begin{equation}\label{c23}
\Delta u(x) + (n-2) \int_{\mathbb{R}^n} \frac{v(y) e^{\frac{2n- \mu}{2}u(y)}}{|x-y|^2} dy =0 \quad \text{for} \quad  n\geq 3.
\end{equation}
Then there is a constant $ C > 0$ such that
\begin{equation}\label{c8}
u(x) \le C \quad \text{and} \quad v(x)\le C \quad \text{in} \quad \mathbb{R}^n.
\end{equation}
\end{lemma}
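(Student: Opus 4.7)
The plan is to bound $u$ from above first, via a log-potential construction combined with a Liouville-type argument, and then to deduce the upper bound on $v$ as a routine corollary. Set $F(y) := v(y) e^{\frac{2n-\mu}{2}u(y)}$; by hypothesis (a), $F \in L^{1}(\mathbb{R}^{n})$, and since $u \in C^{2}$ and $v$ is continuous, $F$ is locally bounded. The equation in (b) reads $-\Delta u = F$ when $n=2$ and $-\Delta u = (n-2)\int \frac{F(y)}{|x-y|^{2}}\,dy$ when $n \ge 3$; in either case $-\Delta u \ge 0$, so $u$ is superharmonic on $\mathbb{R}^{n}$.

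To obtain the upper bound on $u$, I would introduce the auxiliary logarithmic potential
\[
w(x) := c_n \int_{\mathbb{R}^{n}} \ln\!\frac{|y|+1}{|x-y|}\, F(y)\, dy,
\]
with $c_n$ chosen ($c_2 = 1/(2\pi)$, $c_n = 1$ for $n \ge 3$) so that $-\Delta w$ matches the right-hand side of (b); this uses the classical identity $-\Delta_x \ln(1/|x-y|) = (n-2)/|x-y|^{2}$ pointwise for $n\ge 3$ and the Dirac correction $-\Delta \ln(1/|\cdot|) = 2\pi\delta$ for $n=2$. Then $h := u - w$ is harmonic on $\mathbb{R}^{n}$. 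Combining the asymptotic lower bound $w(y) \ge -\alpha \ln(|y|+1) - C$, where $\alpha := c_n \|F\|_{L^{1}} > 0$, with the integrability $\int e^{\frac{2n-\mu}{2}u}\,dy < \infty$ yields
\[
\int_{\mathbb{R}^{n}} e^{\frac{2n-\mu}{2}h(y)}\, (1+|y|)^{-\alpha(2n-\mu)/2}\,dy < \infty.
\]
The harmonic mean-value property $h(x) = \overline{h}_{B_r(x)}$ and Jensen's inequality then force $h(x) \le C_{1}\ln(1+|x|) + C_{2}$, i.e., $h$ has at most logarithmic growth. Standard interior gradient estimates for harmonic functions then give $|\nabla h(x)| \to 0$ as $|x| \to \infty$, so $h$ is constant by Liouville. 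Hence $u = w + \mathrm{const}$, and because $w$ is continuous on $\mathbb{R}^{n}$ (the log singularity in its defining integral is integrable against locally bounded $F \in L^{1}$) and $w(x) \to -\infty$ as $|x| \to \infty$, $w$ attains its supremum on a compact set; we conclude $u \le C$.

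Once $u \le C$ is established, the bound on $v$ is immediate: $e^{\frac{2n-\mu}{2}u} \in L^{\infty}(\mathbb{R}^{n}) \cap L^{1}(\mathbb{R}^{n})$, and splitting
\[
v(x) = \int_{|x-y|<1}\frac{e^{\frac{2n-\mu}{2}u(y)}}{|x-y|^{\mu}}\,dy + \int_{|x-y|\ge 1}\frac{e^{\frac{2n-\mu}{2}u(y)}}{|x-y|^{\mu}}\,dy,
\]
the first piece is controlled by $\|e^{\frac{2n-\mu}{2}u}\|_{\infty}\int_{|z|<1}|z|^{-\mu}\,dz$ (finite since $\mu<n$) and the second by $\|e^{\frac{2n-\mu}{2}u}\|_{L^{1}}$, giving $v \le C'$ uniformly. \textbf{Main obstacle.} The technical heart is the reduction of $h$ to a constant: this requires the asymptotic lower bound on $w$ at infinity (coming from the structure of the log kernel and $\|F\|_{L^{1}}<\infty$), the harmonic mean-value/Jensen chain to extract logarithmic growth of $h$, and then the Liouville theorem in the $O(\log)$-growth form via an interior gradient estimate; once those steps are in place, the bound on $u$ follows from the continuity and decay at infinity of the log potential $w$, and the bound on $v$ reduces to a routine integral splitting.
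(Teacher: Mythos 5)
Your proof takes a genuinely different route from the paper's: you construct the global log-potential $w$, show $h=u-w$ is harmonic, and use a Jensen plus one-sided-gradient-estimate Liouville argument to conclude $h$ is constant and hence $u=w+\mathrm{const}$; the paper instead runs a \emph{local} Brezis--Merle type argument on balls $B_4(x_0)$ (using the tightness of $F=ve^{\frac{2n-\mu}{2}u}\in L^1$ to make the local mass less than $\eps$, so that Jensen gives $\int_{B_2(x_0)}e^{u/\eps}\le C$ uniformly), and then derives $0\le -\Delta u\le C$ and invokes Xu's Lemma (Lemma~\ref{c26}). Steps 1--6 of your proposal, including the lower bound $w\ge -\alpha\ln(1+|x|)-C$ (which only needs $F\ge 0$, $F\in L^1$), the weighted integrability of $e^{\frac{2n-\mu}{2}h}$, the Jensen/MVP step giving $h\le C\ln(1+|x|)$, and the one-sided logarithmic Liouville, are correct, and this is a legitimate way to establish the representation $u=w+\mathrm{const}$ under hypothesis (b).

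However, the final step contains a genuine gap that makes the argument circular. You assert that $w$ is continuous and that $w(x)\to -\infty$ as $|x|\to\infty$, and deduce $u\le C$. The upper bound on $w$ at infinity is not a consequence of $F\in L^1\cap L^\infty_{\mathrm{loc}}$ alone: in the decomposition $w(x)\le C+C\ln(1+|x|)+\int_{|x-y|<1}\ln\frac{1}{|x-y|}F(y)\,dy$, the last term is only controlled by $\|F\|_{L^\infty(B_1(x))}$, which can grow with $|x|$ unless $F$ is \emph{globally} bounded. But $F=ve^{\frac{2n-\mu}{2}u}$ being globally bounded is precisely (a consequence of) the conclusion $u\le C,\ v\le C$ you are trying to prove. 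Indeed, the analogue of your claim in the paper is the upper-bound half of Lemma~\ref{c68} (that $J(x)\le -(\alpha-\eps)\ln|x|$), and its proof at display \eqref{b72} uses $\int_{B_1(x)}\ln|x-y|\,v\,e^{\frac{2n-\mu}{2}u}\,dy\ge -C$, which relies on the uniform bound on $ve^{\frac{2n-\mu}{2}u}$ supplied by Lemma~\ref{c25}. The paper therefore proves Lemma~\ref{c25} first, by a purely local argument that never needs a global upper bound on the potential: it uses the smallness of $\int_{\mathbb{R}^n\setminus B_R}F$ to make the local Jensen constant work, obtains local exponential integrability of $u$ on $B_2(x_0)$ uniformly in far $x_0$, bounds $v$ and then $-\Delta u$ by H\"older, and finally invokes Lemma~\ref{c26}. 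To fix your proposal you would need either to reproduce this local argument, or to find an independent proof that $w$ is bounded above using only $F\in L^1$, $F\in L^\infty_{\mathrm{loc}}$, and $\int e^{\frac{2n-\mu}{2}u}<\infty$; as it stands, the assertion $w(x)\to -\infty$ is unsupported.
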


Before we prove the above Lemma \ref{c25}, we need to recall two results.
\begin{lemma}\label{c26} (see Lemma 3.2 of \cite{Xu05})
Suppose $ u \in C^2(\mathbb{R}^n)$ such that $ 0\leq -\Delta u(x) \le C$ in $ \mathbb{R}^n$ and $ \int_{\mathbb{R}^n}e^{\frac{2n- \mu}{2}u(y)} dy < \infty$. Then there exists a constant $ C >0$, such that
\[
 u(x) \le C \quad \text{in} \quad   \mathbb{R}^n.
\]
\end{lemma}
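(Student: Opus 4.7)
The plan is to invoke Lemma \ref{c26} to obtain $u\le C$. Both factors on the right-hand side of $(b)$ are nonnegative, so $-\Delta u \ge 0$ automatically; combined with $\int e^{(2n-\mu)u/2}\,dy<\infty$ from (a), the missing ingredient for Lemma \ref{c26} is the uniform upper bound $-\Delta u \le C$. Once $u\le C$ is in hand, the bound on $v$ follows immediately by splitting
\[
v(x)=\int_{|x-y|\le 1}\frac{e^{\frac{2n-\mu}{2}u(y)}}{|x-y|^\mu}\,dy+\int_{|x-y|>1}\frac{e^{\frac{2n-\mu}{2}u(y)}}{|x-y|^\mu}\,dy,
\]
the first piece being bounded by $e^{(2n-\mu)C/2}\int_{B_1}|z|^{-\mu}\,dz<\infty$ (since $\mu<n$) and the second by $\int_{\mathbb R^n}e^{(2n-\mu)u/2}\,dy$.

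The heart of the proof is therefore the uniform estimate $-\Delta u\le C$. For $n\ge 3$, set $h=v\,e^{(2n-\mu)u/2}$, which lies in $L^1(\mathbb R^n)$ by (a). Splitting at $|x-y|=R$, the far part of $(n-2)\int h(y)/|x-y|^2\,dy$ is controlled by $(n-2)R^{-2}\|h\|_{L^1}$, so only the local piece $\int_{B_R(x)}h(y)/|x-y|^2\,dy$ needs work. I would first note that $v$ is bounded on bounded sets (continuity of $u$ together with local integrability of $|z|^{-\mu}$), so on $B_R(x)$ the inner integrand reduces to one of the form $C_R\,e^{(2n-\mu)u(y)/2}/|x-y|^2$; a Hölder-type argument combined with the two finiteness conditions in (a) should then give the required uniform bound. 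For $n=2$ the equation is pointwise $-\Delta u=v\,e^{(4-\mu)u/2}$, and the task reduces analogously to an $L^\infty$ bound on this product.

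The main obstacle is circularity: bounding $-\Delta u$ near the local singularity of its integral representation naturally requires an $L^\infty$ bound on $e^{(2n-\mu)u/2}$, which is essentially the statement $u\le C$ we are trying to prove. Following the philosophy of Xu \cite{Xu05}, I would break this circle by exploiting the vanishing $\int_{|y|>R}e^{(2n-\mu)u/2}\,dy\to 0$ as $R\to\infty$ together with a Brezis--Merle-type local integrability argument to upgrade $e^{(2n-\mu)u/2}$ from $L^1(\mathbb R^n)$ to $L^\infty_{\mathrm{loc}}(\mathbb R^n)$; this is the step I expect to require the most delicate analysis. Once that is in place, the previous paragraph yields $-\Delta u\le C$, Lemma \ref{c26} furnishes $u\le C$, and the bound $v\le C$ follows as above.
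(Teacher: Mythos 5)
Your proposal does not prove the statement in question; it proves a different one. The statement here is Lemma \ref{c26} itself (the cited result from \cite{Xu05}): from $0\le -\Delta u\le C$ and $\int_{\mathbb R^n}e^{\frac{2n-\mu}{2}u}\,dy<\infty$ alone, deduce $u\le C$. Your very first sentence is ``invoke Lemma \ref{c26} to obtain $u\le C$,'' which is circular. What you have actually sketched is the proof of Lemma \ref{c25} --- establishing the hypothesis $-\Delta u\le C$ from conditions $(a)$ and $(b)$ and then deducing $u\le C$ and $v\le C$ --- and indeed your outline (splitting the Riesz-type integral at $|x-y|=R$, H\"older near the singularity, a Brezis--Merle-type argument to upgrade $e^{\frac{2n-\mu}{2}u}$ to local higher integrability) mirrors what the paper does there. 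But none of that addresses the lemma you were asked to prove.

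The actual argument for Lemma \ref{c26} is short and does not need any of the integral-equation machinery. Fix $x_0$ and set $w(x)=u(x)+\frac{C}{2n}|x-x_0|^2$; since $-\Delta u\le C$ we have $\Delta w=\Delta u+C\ge 0$, so $w$ is subharmonic and
\[
u(x_0)=w(x_0)\le \frac{1}{|B_1|}\int_{B_1(x_0)}w\,dy=\frac{1}{|B_1|}\int_{B_1(x_0)}u\,dy+c_n C .
\]
By Jensen's inequality,
\[
\exp\Bigl(\frac{2n-\mu}{2}\cdot\frac{1}{|B_1|}\int_{B_1(x_0)}u\,dy\Bigr)\le \frac{1}{|B_1|}\int_{B_1(x_0)}e^{\frac{2n-\mu}{2}u}\,dy\le \frac{1}{|B_1|}\int_{\mathbb R^n}e^{\frac{2n-\mu}{2}u}\,dy<\infty,
\]
so the average of $u$ over $B_1(x_0)$ is bounded above uniformly in $x_0$, and hence so is $u(x_0)$. (The hypothesis $-\Delta u\ge 0$ is not even needed for this direction.) You should supply this argument --- or an equivalent one --- rather than relying on the lemma you are meant to establish.
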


\begin{lemma}\label{c36} (see Theorem 1 of \cite{BreMer})
Let $ h(x)$ be a solution of
\begin{equation}
\left\{
\begin{aligned}
-\Delta h(x) & = f(x) \quad  \text{in} \quad B_R \subset \mathbb{R}^2 \\
h(x) & =0 \qquad \text{on} \quad \partial B_R
\end{aligned}
\right.\nonumber
\end{equation}
with $ f\in L^1(B_R)$, then for any $ \delta \in (0,4\pi)$, there exists a constant $ C_{\delta}>0$ such that
\[
\int_{B_R} \exp\left( \frac{(4\pi -\delta)|h(x)| }{\|f\|_{L^1(B_R)}}\right) dx \leq \frac{4\pi^2}{\delta} R^2.
\]
\end{lemma}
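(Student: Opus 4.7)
The plan is to follow the standard approach for Moser--Trudinger-type embeddings: use the 2D Green's function representation to write $h$ as a logarithmic potential of $f$, then exploit convexity of the exponential via Jensen's inequality to convert $\exp(c\cdot(\text{log potential}))$ into a spatial average of $|x-y|^{-p}$, whose integrability in $\mathbb{R}^{2}$ requires exactly $p<2$. That integrability threshold is precisely what produces the critical constant $4\pi$ in the statement.

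First I would reduce to the case $f\ge 0$. Let $\bar h$ solve $-\Delta\bar h=|f|$ in $B_R$ with $\bar h=0$ on $\partial B_R$; then $\bar h\pm h$ is superharmonic in $B_R$ with non-negative boundary values, so the maximum principle gives $|h(x)|\le\bar h(x)$ in $B_R$. Since $\|f\|_{L^{1}}=\||f|\|_{L^{1}}$, it suffices to prove the estimate for $(\bar h,|f|)$ in place of $(h,f)$; after this reduction we assume $f\ge 0$ and $h\ge 0$. I would then use the explicit Dirichlet Green's function
\[
G(x,y)=\frac{1}{2\pi}\ln\frac{|y|\,|x-y^{*}|}{R\,|x-y|},\qquad y^{*}:=\frac{R^{2}y}{|y|^{2}},
\]
together with the elementary bound $|y|\,|x-y^{*}|\le 2R^{2}$ for $x,y\in B_R$ (which follows from $|x-y^{*}|\le|x|+|y^{*}|\le R+R^{2}/|y|$), to obtain the pointwise representation
\[
0\le h(x)=\int_{B_R}G(x,y)f(y)\,dy\le\frac{1}{2\pi}\int_{B_R}\ln\frac{2R}{|x-y|}f(y)\,dy.
\]

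The main step is Jensen's inequality. Setting $p:=(4\pi-\delta)/(2\pi)=2-\delta/(2\pi)\in(0,2)$ and regarding $d\nu(y):=f(y)\,dy/\|f\|_{L^{1}}$ as a probability measure on $B_R$, the previous bound reads
\[
\frac{(4\pi-\delta)\,h(x)}{\|f\|_{L^{1}}}\le p\int_{B_R}\ln\frac{2R}{|x-y|}\,d\nu(y).
\]
Applying Jensen to the convex function $t\mapsto e^{t}$ and the probability measure $\nu$ yields
\[
\exp\!\left(\frac{(4\pi-\delta)\,h(x)}{\|f\|_{L^{1}}}\right)\le \int_{B_R}\left(\frac{2R}{|x-y|}\right)^{\!p}d\nu(y).
\]
Integrating in $x$ over $B_R$ and swapping order by Fubini--Tonelli reduces the problem to estimating $\int_{B_R}|x-y|^{-p}\,dx$ uniformly in $y\in B_R$. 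By symmetric decreasing rearrangement, since $|z|^{-p}$ is symmetric decreasing, its integral over the translated ball $B_R-y$ is maximized at $y=0$, giving
\[
\sup_{y\in B_R}\int_{B_R}|x-y|^{-p}\,dx=\int_{B_R}|z|^{-p}\,dz=\frac{2\pi R^{2-p}}{2-p}.
\]
Combining these ingredients and using $2-p=\delta/(2\pi)$ produces an estimate of the form $C\,R^{2}/\delta$, matching the claimed shape of the bound.

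The conceptual main obstacle is the sharpness of the exponent $p<2$: at the critical $p=2$ the kernel $|x-y|^{-p}$ just fails to be integrable in the plane, and this is precisely why the exponential integrability in the statement must be restricted to the open interval $0<\delta<4\pi$. Recovering the exact stated constant $4\pi^{2}R^{2}/\delta$ (rather than a larger multiple such as $16\pi^{2}R^{2}/\delta$) requires tighter bookkeeping on the factor $2^{p}$ and on the Green's function bound, which is a purely quantitative refinement of the same argument rather than a new idea.
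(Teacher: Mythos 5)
Your argument is correct and is essentially the proof of Theorem 1 in Brezis--Merle, which the paper simply cites rather than reproves: Green's function representation, the pointwise bound $G(x,y)\le\frac{1}{2\pi}\ln\frac{2R}{|x-y|}$, Jensen's inequality with respect to the probability measure $|f|\,dy/\|f\|_{L^1}$, and Fubini together with the integrability of $|x-y|^{-p}$ for $p=2-\delta/(2\pi)<2$. On the constant: your bookkeeping yields $2^{p}\cdot 4\pi^{2}R^{2}/\delta\le 16\pi^{2}R^{2}/\delta$, and this is in fact the right order --- Brezis--Merle's bound is $\frac{4\pi^{2}}{\delta}(\operatorname{diam}\Omega)^{2}=\frac{16\pi^{2}}{\delta}R^{2}$ for $\Omega=B_R$ (the factor $2R$ in the kernel bound is sharp, e.g.\ for antipodal boundary points), so the $\frac{4\pi^{2}}{\delta}R^{2}$ in the paper's statement is a harmless misquote; only the qualitative exponential integrability is used in the proof of Lemma 2.1.
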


\begin{proof}[Proof of Lemma \ref{c25}] Firstly, we consider $ n\geq 3$. For any $ 0< \eps < \min \left\{ \frac{2(n-\mu)}{(2n-\mu)n}, \frac{2(n-2)}{(2n-\mu)n}\right\}$, there exists an $ R>0$ sufficiently large such that
\be\label{c22}
\int_{\mathbb{R}^n\setminus B_R} v(y) e^{\frac{2n- \mu}{2}u(y)} dy \leq \eps.
\ee
Now $ \forall x_0 \in \mathbb{R}^n\setminus B_{R+8}$, consider the solution $ h$ of the equation
\begin{equation}\label{c21}
\left\{
\begin{aligned}
-\Delta h(x) & = (n-2) \int_{B_4(x_0)} \frac{v(y) e^{\frac{2n- \mu}{2}u(y)}}{|x-y|^2} dy \quad \text{in} \ B_4(x_0) \\
h& =0 \qquad \qquad \qquad \qquad \qquad \qquad \qquad \text{on} \ \ptl B_4(x_0).
\end{aligned}
\right.
\end{equation}
From the maximum principle we conclude that
\[
h(x) \ge 0, \quad x \in B_4(x_0).
\]
And also consider the function
\[
h_1(x) = \int_{B_4(x_0)} \ln \left(\frac{16}{|x-y|}\right) v(y) e^{\frac{2n- \mu}{2}u(y)} dy , \quad \forall x\in B_4(x_0).
\]
We conclude that
\[
h_1(x) \geq 0 \quad \text{in} \quad B_4(x_0).
\]
By a simple calculation, we have
\be\label{c20}
-\Delta h_1(x) = (n-2) \int_{B_4(x_0)} \frac{v(y) e^{\frac{2n- \mu}{2}u(y)}}{|x-y|^2} dy \quad \text{in} \quad B_4(x_0).
\ee
Combining \eqref{c21} and \eqref{c20} yields that
\begin{equation}
\left\{
\begin{aligned}
-\Delta ( h -h_1) = 0 \quad \text{in} \ B_4(x_0)\\
h -h_1 \le 0 \qquad \text{on} \ \ptl B_4(x_0).
\end{aligned}
\right. \nonumber
\end{equation}
The maximum principle allows us to conclude that
\[
h(x) \le h_1(x),  \quad x\in B_4(x_0).
\]

Now let us denote the measure $ v(y) e^{\frac{2n- \mu}{2}u(y)} dy / \int_{B_4(x_0)}v(y) e^{\frac{2n- \mu}{2}u(y)} dy $ by $ d\mu$. Therefore Jenson's inequality, together with \eqref{c22}, imply that
\begin{equation}
\begin{aligned}
\int_{B_4(x_0)} e^{\frac{h(x)}{\eps}} dx & \le \int_{B_4(x_0)} \exp\left(\frac{h_1(x)}{\int_{B_4(x_0)}v(y) e^{\frac{2n- \mu}{2}u(y)} dy}\right) dx \\
& = \int_{B_4(x_0)} \exp\left( \int_{B_4(x_0)} \ln \left(\frac{16}{|x-y|}\right) d \mu\right) dx \\
& \le \int_{B_4(x_0)} \int_{B_4(x_0)} \frac{16}{|x-y|} d \mu dx \\
& = \int_{B_4(x_0)} \int_{B_4(x_0)} \frac{16}{|x-y|} dx d \mu \\
& \le C. \nonumber
\end{aligned}
\end{equation}
We consider the function
\[
q(x) = u(x) - h(x), \quad x\in B_3(x_0).
\]
We find that
\begin{equation}
\begin{aligned}
\Delta q(x) &= \Delta u(x) - \Delta h(x) = - (n-2) \int_{\mathbb{R}^n \setminus B_4(x_0)} \frac{v(y) e^{\frac{2n- \mu}{2}u(y)}}{|x-y|^2} dy.
\nonumber
\end{aligned}
\end{equation}
If $ x\in B_3(x_0)$ and $ y \in \mathbb{R}^n \setminus B_4(x_0)$, then $ |x-y|\geq 1$. Therefore, one has
\[
0\le -\Delta q(x) \le (n-2)\beta_n \alpha.
\]
Hence it follows from Harnack principle (Theorem 8.17 of \cite{GT}) that
\[
\sup_{B_2(x_0)} q(x) \le C(\|q^+\|_{L^2(B_3(x_0))} + \|\Delta q\|_{L^\infty(B_3(x_0))}).
\]
To estimate the first term, we note that
\[
q^+(x) \le u^+(x).
\]
Thus we get
\begin{equation}
\begin{aligned}
\int_{B_3(x_0)} (q^+(x))^2 dx & \le C \int_{B_3(x_0)} e^{\frac{2n-\mu}{2} q^+(x)} dx  \\
& \le  C \int_{B_3(x_0)} e^{\frac{2n-\mu}{2} u^+(x)} dx \\
& \le C.
\nonumber
\end{aligned}
\end{equation}
Therefore, it follows that
\[
u(x) = q(x) + h(x) \le C +h(x), \quad x \in B_2(x_0).
\]
Therefore we reach the estimate
\[
\int_{B_2(x_0)} e^{\frac{u(y)}{\eps}} dy  \le C\int_{B_2(x_0)} e^{\frac{ h(y)}{\eps}} dy \le \tilde{C}.
\]
By the definition of $ v(x)$, we get
\begin{equation}
\begin{aligned}
v(x) &= \int_{\mathbb{R}^n} \frac{e^{\frac{2n-\mu}{2}u(y)}}{|x-y|^{\mu}}dy \\
& = \int_{\mathbb{R}^n \setminus B_1(x)} \frac{e^{\frac{2n-\mu}{2}u(y)}}{|x-y|^{\mu}}dy + \int_{B_1(x)} \frac{e^{\frac{2n-\mu}{2}u(y)}}{|x-y|^{\mu}}dy \\
& \le C + \left(\int_{B_1(x)} \frac{1}{|x-y|^{p\mu}}dy \right)^{\frac{1}{p}} \left(\int_{B_1(x)} e^{\frac{2n-\mu}{2}qu(y)} dy \right)^{\frac{1}{q}} \\
& \le C. \nonumber
\end{aligned}
\end{equation}
In getting last inequality of above, we need $ 1< p < \frac{n}{\mu}$, $ \frac{1}{p} +\frac{1}{q} =1$ and $ q> \frac{n}{n-\mu}$. According to $ \frac{1}{\eps} > \frac{n(2n-\mu)}{2(n-\mu)}$, we can choose $ q $ satisfying
\[
\frac{1}{\eps} \ge \frac{2n-\mu}{2} q > \frac{n(2n-\mu)}{2(n-\mu)},
\]
such that $ v(x) \le C$. By \eqref{c23}, we have for any $ |x_0|$ sufficiently large,
\begin{equation}
\begin{aligned}
-\Delta u(x_0) &= (n-2) \int_{\mathbb{R}^n} \frac{v(y) e^{\frac{2n- \mu}{2}u(y)}}{|x_0-y|^2} dy \\
& = (n-2) \int_{\mathbb{R}^n \setminus B_2(x_0)} \frac{v(y) e^{\frac{2n- \mu}{2}u(y)}}{|x_0-y|^2} dy + (n-2) \int_{B_2(x_0)} \frac{v(y) e^{\frac{2n- \mu}{2}u(y)}}{|x_0-y|^2} dy \\
& \le C+C \left(\int_{B_2(x_0)} \frac{1}{|x_0-y|^{2p_1}} dy \right)^{\frac{1}{p_1}}  \left(\int_{B_2(x_0)} e^{\frac{2n- \mu}{2}q_1 u(y)} dy \right)^{\frac{1}{q_1}} \\
& \le C,\nonumber
\end{aligned}
\end{equation}
where $ \frac{1}{p_1} +\frac{1}{q_1} =1$, $1< p_1 < \frac{n}{2}$ and $ q_1 > \frac{n}{n-2}$. Since $ \frac{1}{\eps} > \frac{n(2n-\mu)}{2(n-2)}$, we can choose $ q_1$ satisfying
\[
\frac{1}{\eps}\ge \frac{2n- \mu}{2}q_1 > \frac{n(2n-\mu)}{2(n-2)}
\]
such that $ -\Delta u(x_0) \le C$.
Therefore, $ -\Delta u \geq 0$ is bounded on $ \mathbb{R}^n$. According to Lemma \ref{c26}, we get directly
\[
u(x) \leq C,
\]
for some constant $ C>0$.

Then for the case $ n=2$, by the regularity theory of elliptic equations, we see that
\[
\sup_{B_1(x_0)} u \leq C \{\|u^+\|_{L^2(B_2(x_0))} + \|ve^{\frac{4-\mu}{2}u}\|_{L^2(B_2(x_0))}\}
\]
Similarly, we consider that $ h(x)$ and $ q(x)$ satisfy the following equations
\begin{equation}
\left\{
\begin{aligned}
-\Delta h(x) & = v(x)e^{\frac{4-\mu}{2}u(x)} \quad \text{in} \quad B_4(x_0),\\
h(x) & =0 \qquad \text{on} \quad \partial B_4(x_0)
\end{aligned}
\right.
\end{equation}
and
\begin{equation}
\left\{
\begin{aligned}
-\Delta g(x) & =0 \qquad \text{in} \quad B_4(x_0),\\
g(x) & =u(x) \quad \text{on} \quad \partial B_4(x_0).
\end{aligned}
\right.
\end{equation}
We know that
\[
u(x) = h(x) + g(x)\quad \text{in} \quad B_4(x_0).
\]
Using Lemma \ref{c36} and the mean value property of harmonic function, we can prove that
\[
u(x) \leq C \quad \text{and} \quad v(x) \leq C
\]
for some constant $ C>0$. The proof is similar to the case for $ n\geq 3$ and is omitted.
\end{proof}

Define
\begin{equation}\label{c56}
J(x) =\frac{1}{\beta_n}  \int_{\mathbb{R}^n}\left[ \ln \left(\frac{|y|+1}{|x-y|}\right)\right] v(y) e^{\frac{2n-\mu}{2}u(y)} dy.
\end{equation}
Since $ \int_{\mathbb{R}^n}v(y) e^{\frac{2n-\mu}{2}u(y)} dy < \infty$, $ J(x)$ is well-defined.

\begin{lemma}\label{c68}
Suppose that $ u$ satisfies condition \eqref{c2}. Then there holds
\[
\lim_{|x| \rightarrow \infty} \frac{J(x)}{ \ln |x|} = -\alpha.
\]
\end{lemma}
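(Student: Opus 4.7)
The plan is to view the claim as a statement about the behavior of $J(x)$ at spatial infinity, based on the heuristic observation that for each fixed $y$, $\frac{1}{\ln|x|}\ln\frac{|y|+1}{|x-y|}\to -1$ as $|x|\to\infty$. If one could interchange limit and integration, this would immediately yield $J(x)/\ln|x|\to -\frac{1}{\beta_n}\int v\,e^{\frac{2n-\mu}{2}u}\,dy=-\alpha$. Writing $f(y)=v(y)e^{\frac{2n-\mu}{2}u(y)}$, the task reduces to producing a partition of $\mathbb{R}^n$ together with $x$-uniform bounds that justify the interchange.

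Concretely, for $|x|$ large I would split $\mathbb{R}^n = A_x\cup B_x\cup C_x$ with $A_x=\{|y|\le |x|/2\}$, $B_x=\{|x|/2<|y|<2|x|\}$, and $C_x=\{|y|\ge 2|x|\}$. On $A_x$ the triangle inequality gives $|x-y|\in[|x|/2,3|x|/2]$ and $|y|+1\le |x|/2+1$, so $\frac{1}{\ln|x|}\ln\frac{|y|+1}{|x-y|}$ is bounded by an $|x|$-independent constant and converges pointwise to $-1$; dominated convergence against the finite measure $f(y)\,dy$ then delivers the full contribution $-\alpha$ from $A_x$. On $C_x$ one has $|x-y|\in[|y|/2,3|y|/2]$ and $|y|+1\le 2|y|$, hence $\bigl|\ln\frac{|y|+1}{|x-y|}\bigr|\le C$ uniformly in $x$ and $y$, so the contribution is bounded by $\frac{C}{\beta_n\ln|x|}\int f\to 0$.

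The delicate region is $B_x$, where $\ln|x-y|$ may be singular. I would split it further into $B_x^{\mathrm{far}}=B_x\cap\{|x-y|\ge 1\}$ and $B_x^{\mathrm{near}}=B_x\cap\{|x-y|<1\}$. On $B_x^{\mathrm{far}}$ both $\ln(|y|+1)$ and $|\ln|x-y||$ are at most $C\ln|x|$, so the contribution is dominated by $C\int_{B_x}f\,dy$, which tends to $0$ because $B_x\subset\{|y|\ge |x|/2\}$ and $f\in L^1(\mathbb{R}^n)$ by \eqref{c2}. On $B_x^{\mathrm{near}}$ I would invoke Lemma \ref{c25} to conclude $f\le C'$ on $\mathbb{R}^n$, whence the contribution is at most $\frac{C'}{\ln|x|}\int_{B(x,1)}\bigl(1+|\ln|x-y||\bigr)\,dy=O(1/\ln|x|)\to 0$.

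The main obstacle is precisely this $B_x^{\mathrm{near}}$ piece: without a pointwise bound on $f$, the interior logarithmic singularity of $\ln|x-y|$ could in principle contribute at order $\ln|x|$ and spoil the claimed limit. This is exactly the reason Lemma \ref{c25} is established in advance, and its use here is essential. Combining the three regional estimates then yields $J(x)/\ln|x|\to -\alpha$ as $|x|\to\infty$.
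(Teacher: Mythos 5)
Your argument is correct and takes a genuinely different route from the paper's. The paper proceeds by two one-sided bounds: first the exact pointwise inequality $J(x)\ge-\alpha\ln|x|$ for $|x|\ge 2$ (dropping the positive contribution on $\{|x-y|\le|x|/2\}$ and using $\ln\frac{|y|+1}{|x-y|}\ge-\ln|x|$ on the complement), and then an $\eps$-$R_\eps$ upper bound by splitting off a fixed large ball $\{|y|\le R_0\}$ carrying almost all the mass. You instead use a single $x$-dependent partition $A_x\cup B_x\cup C_x$, extract the full $-\alpha$ from $A_x$ by dominated convergence, and show the other two regions are $o(1)$ after dividing by $\ln|x|$; both proofs invoke Lemma \ref{c25} to tame the logarithmic singularity near $y=x$. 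Your version is more streamlined, but the paper's lower-bound step yields \eqref{c13}, namely $J(x)\ge-\alpha\ln|x|$, as a free byproduct, and that exact inequality is cited later in the proofs of Theorem \ref{c28} (to bound the poly-harmonic defect $u-J$ from above and run the Liouville argument) and Theorem \ref{c61} (to upgrade $u(x)/\ln|x|\to-\alpha$ to $u(x)=-\alpha\ln|x|+O(1)$). A one-shot dominated-convergence proof does not supply \eqref{c13}, so for the downstream arguments you would still have to establish it separately.

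One small inaccuracy in the $B_x^{\mathrm{near}}$ estimate: there $|y|\sim|x|$, so $\ln(|y|+1)\sim\ln|x|$ is not $O(1)$, and the stated pointwise domination $C'\bigl(1+|\ln|x-y||\bigr)$ for the integrand is false. The conclusion survives, since the $\ln(|y|+1)$ contribution is bounded by $\frac{\ln(2|x|+1)}{\ln|x|}\int_{B_1(x)}f\,dy$, which tends to $0$ because $B_1(x)\subset\{|y|>|x|/2\}$ for $|x|$ large and $f\in L^1(\mathbb{R}^n)$ by \eqref{c2} — but the decay rate is only $o(1)$, not $O(1/\ln|x|)$ as claimed.
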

\begin{proof}
Firstly, we prove that
\begin{equation}\label{c13}
J(x) \geq -\alpha \ln|x|,
\end{equation}
for $ |x|$ large. Set
\[
D_1=\left\{y \in \mathbb{R}^n | |y-x|\leq \frac{|x|}{2} \right\}\ \text{and} \ \ D_2=\left\{y \in \mathbb{R}^n | |y-x| > \frac{|x|}{2}\right\}.
\]
For $ y\in D_1$, we have $ |x-y|\leq \frac{|x|}{2} \leq |y| < |y|+1$, hence
\[
\ln \frac{|y|+1}{|x-y|} > 0,
\]
which further implies
\[
J(x) \geq \frac{1}{\beta_n} \int_{D_2} \left[ \ln \left(\frac{|y|+1}{|x-y|}\right)\right]  v(y) e^{\frac{2n-\mu}{2} u(y)} dy.
\]
For $ |x|\geq 2$, we find $ |x-y| \leq |x|+|y| \leq |x|(1+|y|)$ and
\[
\ln \frac{|y|+1}{|x-y|} \geq \ln \frac{1}{|x|}.
\]
For $ |x|\geq 2$, we conclude that
\begin{align*}
J(x)& \geq \frac{-\ln|x|}{\beta_n}  \int_{D_2}  v(y) e^{\frac{2n-\mu}{2} u(y)}  dy \\
& \geq -\alpha \ln|x|.
\end{align*}

Then, we claim that $ \forall \eps >0$, there exists an $ R_\eps > 0$ such that
\[
J(x) \le -(\alpha - \eps )\ln|x|, \quad \forall |x| \ge R_\eps.
\]
Let $ A_1=\{y \in \mathbb{R}^n||y|\leq R_0\}$. Then we can choose $ R_0$ large enough, such that
\begin{equation}\label{b47}
\frac{1}{\beta_n} \int_{A_1} [\ln|x-y|- \ln(|y|+1)]v(y) e^{\frac{2n-\mu}{2}u(y)} dy \geq \left(\alpha - \frac{\eps}{2}\right) \ln|x|.
\end{equation}
Let $ A_2=\left\{y \in \mathbb{R}^n ||y-x|\leq \frac{|x|}{2}, |y|\geq R_0 \right\}$ and $ A_3=\left\{y \in \mathbb{R}^n||y-x|> \frac{|x|}{2}, |y|\geq R_0 \right\}$. Then
\begin{equation}\label{b72}
\begin{aligned}
& \frac{1}{\beta_n}\int_{A_2} [\ln|x-y|-\ln(|y|+1)]v(y) e^{\frac{2n-\mu}{2}u(y)} dy \\
\geq & \frac{1}{\beta_n}\int_{B_1(x)}\ln|x-y|v(y) e^{\frac{2n-\mu}{2}u(y)}  dy - \frac{1}{\beta_n}\int_{A_2} \ln(|y|+1)v(y) e^{\frac{2n-\mu}{2}u(y)}  dy \\
\geq & -C - \frac{\ln(2|x|)}{\beta_n} \int_{A_2} v(y) e^{\frac{2n-\mu}{2}u(y)}  dy \\
\geq & -C - \frac{\eps}{4} \ln|x|.
\end{aligned}
\end{equation}

If $ y\in A_3$, then we find $ |y-x| >\frac{|x|}{2} \geq \frac{1}{2}(|y|-|x-y|)$, i.e.,
\[
\frac{|x-y|}{|y|} \geq \frac{1}{3}
\]
or
\[
\frac{|x-y|}{|y|+1} \geq \frac{1}{6}.
\]
Hence, it is clear that
\begin{equation}\label{b73}
\begin{aligned}
\frac{1}{\beta_n} \int_{A_3}[\ln|x-y|-\ln(|y|+1)]v(y)e^{\frac{2n-\mu}{2}u(y)}  dy & \geq \frac{-\ln 6}{\beta_n}  \int_{A_3} v(y)e^{\frac{2n-\mu}{2}u(y)}dy \\
& \geq - \frac{\eps}{4} \ln|x|
\end{aligned}
\end{equation}
for $ |x|$ enough large.
Finally, we infer from \eqref{b47}-\eqref{b73} that
\[
-J(x)\geq (\alpha -\eps)\ln|x|
\]
for $ |x|$ large enough. This proves the claim.
\end{proof}

\begin{thm}\label{c28}
Let $ u \in C^{n}$ be the solution of the following equation
\begin{equation}
\left\{
\begin{aligned}
& (-\Delta)^{\frac{n}{2}} u(x) = v(x) e^{\frac{2n- \mu}{2}u(x)} \quad \text{in} \quad  \mathbb{R}^n, \\
& v(x) = \int_{\mathbb{R}^n} \frac{e^{\frac{2n- \mu}{2}u(y)}}{|x-y|^{\mu}}dy \quad  \text{in} \quad  \mathbb{R}^n, \\
\end{aligned}
\right. \nonumber
\end{equation}
where $ n\ge 2$. If $ u(x) = o(|x|^2)$ for $ n\geq 3$ and
\be\label{c10}
\alpha := \frac{1}{\beta_n} \int_{\mathbb{R}^n} v(y) e^{\frac{2n- \mu}{2}u(y)} dy < \infty \qquad \int_{\mathbb{R}^n}e^{\frac{2n- \mu}{2}u(y)} dy < \infty,
\ee
then $ u$ is given by
\be\label{c29}
u(x) = \frac{1}{\beta_n}  \int_{\mathbb{R}^n}\left[ \ln \left(\frac{|y|+1}{|x-y|}\right)\right] v(y) e^{\frac{2n-\mu}{2}u(y)} dy + C,
\ee
for some constant $ C$.
\end{thm}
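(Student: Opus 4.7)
The plan is to show $w(x) := u(x) - J(x)$ is a constant, where $J$ is defined in \eqref{c56}; this immediately yields the representation \eqref{c29}. The argument has three main stages: verify that $J$ satisfies the same PDE as $u$ so that $w$ is $(-\Delta)^{n/2}$-harmonic, control the growth of $w$, and conclude via a Liouville-type classification together with the global upper bound for $u$ from Lemma \ref{c25}.

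The central technical step is to establish $(-\Delta)^{n/2} J(x) = v(x) e^{\frac{2n-\mu}{2}u(x)}$ on $\mathbb{R}^n$. The factor $\ln(|y|+1)$ in the kernel is independent of $x$ and contributes nothing, while the remaining $\frac{1}{\beta_n}\ln\frac{1}{|x-y|}$ is precisely the fundamental solution of $(-\Delta)^{n/2}$ in $\mathbb{R}^n$ for the normalization $\beta_n = 2^{n-1}(\tfrac{n}{2})!(\tfrac{n}{2}-1)!\omega_n$ (checkable directly for $n = 2$ and in general via standard Riesz potential identities). To differentiate under the integral rigorously I would split the integration domain into $B_R(x)$ and its complement: the exterior part is controlled directly by \eqref{c10}, while the singular interior part is treated by integration by parts against a suitable cut-off. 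When $n$ is odd we view $(-\Delta)^{n/2} = (-\Delta)^{1/2}\circ(-\Delta)^{(n-1)/2}$ and carry out the computation within the Riesz potential framework.

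Once this is in hand, $w = u - J$ satisfies $(-\Delta)^{n/2} w = 0$ on $\mathbb{R}^n$. By Lemma \ref{c68}, $J(x) = O(\log|x|)$ at infinity, and together with the hypothesis $u(x) = o(|x|^2)$ this gives $w(x) = o(|x|^2)$. A standard Liouville-type theorem for $(-\Delta)^{n/2}$ (polyharmonic in the even case, fractional in the odd case) then forces $w(x) = a + b\cdot x$ for some $a \in \mathbb{R}$ and $b \in \mathbb{R}^n$. To eliminate the linear term, I would first derive the intermediate equation $-\Delta u = (n-2)\int |x-y|^{-2} v(y) e^{\frac{2n-\mu}{2}u(y)}\,dy$ by applying the Riesz potential $I_{n-2}$ to the main equation, and then invoke Lemma \ref{c25} to obtain $u \leq C$ globally. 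Combined with the lower bound $J(x) \geq -(\alpha+\varepsilon)\log|x| - C'$ implicit in Lemma \ref{c68}, we get $a + b\cdot x \leq C'' + (\alpha+\varepsilon)\log|x|$, which is incompatible with $b \neq 0$ when tested along the ray $x = tb$ as $t \to \infty$. Hence $w \equiv a$ and \eqref{c29} follows.

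The main obstacle is the rigorous execution of Step 1, namely the differentiation inside a singular integral and the interpretation of the nonlocal operator when $n$ is odd. A secondary technical point is verifying the intermediate Riesz-potential equation needed to apply Lemma \ref{c25} for $n \geq 3$, which relies on \eqref{c10} and the finiteness of $\alpha$.
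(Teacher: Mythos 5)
Your overall strategy—show $(-\Delta)^{n/2}J = v e^{\frac{2n-\mu}{2}u}$, conclude $w=u-J$ is $(-\Delta)^{n/2}$-harmonic, control its growth, and invoke Liouville—is the same skeleton as the paper's, and your proof is essentially correct. But the key intermediate step (reducing poly-harmonicity of $w$ to harmonicity) is handled by a genuinely different route, and there are two places where your version needs tightening.

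On the route: you propose to use the \emph{two-sided} bound $w=o(|x|^2)$ (from $u=o(|x|^2)$ and $J=O(\log|x|)$, the latter available from Lemma~\ref{c68}) and then cite a black-box poly-harmonic Liouville theorem to conclude $w$ is affine. The paper instead takes $p=[\frac{n+1}{2}]$, writes the Pizzetti expansion $P(r)$ for the spherical means of $g=u-J$ explicitly, and pins down $(\Delta)^{p-1}g=0$ by two \emph{one-sided} arguments: Jensen's inequality with the integrability $\int e^{\frac{2n-\mu}{2}u}<\infty$ and the lower bound $J\ge -\alpha\ln|x|$ give $(\Delta)^{p-1}g\le 0$, while the superharmonicity $\Delta J\le 0$ together with $u=o(|x|^2)$ give $(\Delta)^{p-1}g\ge 0$. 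Your route buys brevity by treating the poly-harmonic Liouville as a known theorem; the paper's is self-contained and, more importantly, requires only one-sided control of $g$. Once $\Delta g = 0$ is known, both you and the paper compute $\Delta J$ directly to verify condition (b) of Lemma~\ref{c25}, obtain $u\le C$, and kill the linear term—your detour through ``applying $I_{n-2}$ to the main equation'' is unnecessary (and potentially ambiguous up to poly-harmonic kernel terms); at that stage $\Delta u = \Delta J$ is already available and $\Delta J$ is an explicit convolution.

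Two gaps to be aware of. First, for $n=2$ the hypothesis $u=o(|x|^2)$ is \emph{not} assumed in the theorem, so $w=o(|x|^2)$ is not available and your two-sided Liouville step fails; fortunately $p=1$ there, so $w$ is directly harmonic and one concludes via the one-sided bound $w\le C+\alpha\ln|x|$ from Lemma~\ref{c25} and \eqref{c13}—you should treat $n=2$ separately. Second, for odd $n$ you invoke a ``fractional Liouville theorem,'' which is nonstandard and requires function-space hypotheses you have not verified; the paper sidesteps this cleanly by observing that $(-\Delta)^{n/2}w=0$ implies $(-\Delta)^{p}w=0$ with the integer $p=(n+1)/2$, after which only the classical poly-harmonic theory is needed. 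Adopting that reduction would close the gap in your odd-$n$ case.
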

\begin{proof} By \eqref{c56}, $ J(x)$ satisfies
\[
(-\Delta)^{\frac{n}{2}} J(x) = v(x) e^{\frac{2n- \mu}{2}u(x)}.
\]
Set $ p= [\frac{n+1}{2}]$, the greatest integer less than or equal to $ \frac{n+1}{2}$. We can see that the function $ u - J$ is a poly-harmonic function with $ (-\Delta)^p(u-J) = 0$.

Claim: If $ p\ge 2$, $ (-\Delta)^{p-1}(u-J) = 0 $.

Let $ g = u-J$. Since $ (\Delta)^{p-1}g $ is harmonic, by mean value theorem and divergence theorem, we have $ \forall x_0 \in \mathbb{R}^n$ and $ \forall r>0$,
\begin{equation}\label{c11}
\begin{aligned}
&[(\Delta)^{p-1}g](x_0) \\
= &\frac{1}{\omega_n r^n} \int_{B_r(x_0)} [(\Delta)^{p-1}g] (y)dy \\
=& \frac{1}{\omega_n r^n} \int_{\partial B_r(x_0)} \frac{\ptl}{\ptl r}[(\Delta)^{p-2}g] (y)dS \\
=& \frac{n}{r}\frac{1}{n\omega_n r^{n-1}} \int_{\partial B_r(x_0)} \frac{\ptl}{\ptl r}[(\Delta)^{p-2}g] (y)dS \\
= &\frac{n}{r}\frac{\ptl}{\ptl r} \left(\frac{1}{n\omega_n r^{n-1}} \int_{\partial B_r(x_0)} [(\Delta)^{p-2}g] (y)dS \right),
\end{aligned}
\end{equation}
where $ \frac{\ptl}{\ptl r}$ is the radial derivative along the sphere.

Multiplying \eqref{c11} by $ \frac{r}{n}$ and integrating on $ (0,r)$, we obtain
\begin{equation}\label{c12}
\frac{r^2}{2n} [(\Delta)^{p-1}g](x_0) + [(\Delta)^{p-2}g](x_0) = \frac{1}{n\omega_n r^{n-1}} \int_{\partial B_r(x_0)} [(\Delta)^{p-2}g] (y)dS.
\end{equation}

Then multiplying \eqref{c12} by $ r^{n-1}n$ and integrating on $ (0,r)$ and dividing the whole resulting equation by $ r^n $ to get
\[
\frac{r^2}{2(n+2)} [(\Delta)^{p-1}g](x_0) + [(\Delta)^{p-2}g](x_0) = \frac{1}{\omega_n r^{n}} \int_{B_r(x_0)} [(\Delta)^{p-2}g] (y)dy.
\]
Repeating the above argument $ p-1$ times to get
\begin{equation}\label{c14}
\begin{aligned}
P(r):= & C_1(n,p)r^{2(p-1)} [(\Delta)^{p-1}g](x_0) + C_2(n,p)r^{2(p-2)} [(\Delta)^{p-2}g](x_0) \\
& + \cdots + C_{p-1}(n,p)r^{2} (\Delta g)(x_0) \\
 = &  \frac{1}{n\omega_n r^{n-1}} \int_{\partial B_r(x_0)} g(y)dS - g(x_0),
\end{aligned}
\end{equation}
where $ C_i(n,p) > 0$, $ i = 1, \cdots, p-1$.

By Jensen's inequality, one gets
\begin{equation}\label{c15}
\begin{aligned}
\text{exp}(\theta P(r)) & = e^{ -\theta g(x_0)} \text{exp}\left[\frac{1}{n\omega_n r^{n-1}} \int_{\partial B_r(x_0)} \theta g(y)dS \right] \\
& \leq e^{ - \theta g(x_0)} \frac{1}{n\omega_n r^{n-1}} \int_{\partial B_r(x_0)} e^{\theta g(y)}dS,
\end{aligned}
\end{equation}
where $ \theta$ is a constant.

Using \eqref{c13}, we see that
\[
g(x) = u(x) -J(x) \leq u(x) + \alpha \ln|x|.
\]
Let $ \theta = \frac{2n-\mu}{2}$ in \eqref{c15}. Then one gets
\begin{align*}
r^{-\frac{2n- \mu}{2}\alpha} e^{\frac{2n- \mu}{2} P(r)} & \leq r^{-\frac{2n- \mu}{2}\alpha} e^{ - \frac{2n- \mu}{2} g(x_0)} \frac{1}{n\omega_n r^{n-1}} \int_{\partial B_r(x_0)} e^{\frac{2n- \mu}{2} (u(y) + \alpha \ln|y|)}dS \\
& \leq C \frac{1}{r^{n-1}} \int_{\partial B_r(x_0)}  e^{\frac{2n- \mu}{2}u(y)} \left(\frac{|y|}{r} \right)^{\frac{2n- \mu}{2}\alpha} dS \\
& \leq C \frac{1}{r^{n-1}} \int_{\partial B_r(x_0)}  e^{\frac{2n- \mu}{2}u(y)}dS
\end{align*}
for $ r \geq 1$. Thus we have $ r^{-\frac{2n- \mu}{2}\alpha} e^{\frac{2n- \mu}{2} P(r)} \in L^1(1,\infty)$. Hence the leading coefficient in $ P(r)$ must be non-positive. That is,
\[
[(\Delta)^{p-1}g](x_0) = -C_0 \leq 0.
\]

Note that by the definition of $ J(x)$, we know that $ \Delta J(x) \leq 0$ in $ \mathbb{R}^n$. By mean value property for super-harmonic function, we have for any $ x_0\in \mathbb{R}^n$ and $ r> 0$
\begin{equation}\label{c17}
J(x_0)\geq \frac{1}{n\omega_n r^{n-1}} \int_{\partial B_r(x_0)} J(y)dS.
\end{equation}
Then it follows from \eqref{c14} and \eqref{c17} that
\[
P(r) \geq  \frac{1}{n\omega_n r^{n-1}} \int_{\partial B_r(x_0)} u(y)dS - u(x_0).
\]
Since $ u=o(|x|^2)$ at $ \infty$, by dividing both sides by $ r^2$ and letting $ r \rightarrow \infty$, we observe that
\[
\varliminf_{r \rightarrow \infty} \frac{P(r)}{r^2} \geq 0.
\]
If $ C_0 < 0$, then we would have
\[
\varlimsup_{r \rightarrow \infty} \frac{P(r)}{r^2} <0
\]
or $ \frac{P(r)}{r^2}$ tends to $ -\infty$ as $ r\rightarrow \infty$. This is impossible. We deduce that
\[
[(\Delta)^{p-1}g](x) = 0, \quad \forall x \in \mathbb{R}^n.
\]

Similarly, $ (\Delta)^{p-2}g = \cdots = \Delta g =0$, i.e.,
\begin{equation}\label{c67}
\Delta u =\Delta J \leq 0.
\end{equation}
If $ p=1$ and $ n=2$, we get directly \eqref{c67}.
According to \eqref{c3}, \eqref{c13} and Lemma \ref{c25}, we finally conclude that $ g=u-J$ is an entire harmonic function with
\[
\liminf_{|x| \rightarrow \infty} \frac{J(x) - u(x)}{|x|^2} \ge 0,
\]
for $ n\geq 2$. Then the classical Liouville theorem implies that
\[
u=J(x)+ \sum_{i=1}^n c_i x_i + d
\]
for some constants $ c_i$ and $ d$. Now the integrability condition \eqref{c10} forces $ c_i = 0$ for $ i=1,2,\cdots,n.$ Hence, we conclude that $ u$ satisfies the integral equation \eqref{c29}.
\end{proof}

\begin{proof}[Proof of Theorem \ref{c61}] By Lemma \ref{c68} and Theorem \ref{c28}, we deduce that
\begin{equation}\label{c69}
\lim_{|x| \rightarrow \infty} \frac{u(x)}{ \ln |x|} = -\alpha.
\end{equation}

Since
\[
\int_{\mathbb{R}^n}e^{\frac{2n- \mu}{2}u(y)} dy < \infty,
\]
from \eqref{c13} and \eqref{c29}, we conclude that
\begin{equation}\label{c55}
\alpha > \frac{2n}{2n- \mu}.
\end{equation}

Then we claim that
\[
|x|^{\mu} v(x) - \beta \rightarrow 0 \ \ \text{as} \ \ |x| \rightarrow \infty.
\]
We know from
\[
|x|^{\mu} v(x) - \beta = \int_{\mathbb{R}^n} \frac{|x|^{\mu} - |x-y|^\mu}{|x-y|^\mu} e^{\frac{2n- \mu}{2}u(y)} dy.
\]

We take $ A_i$, $ i = 1,2,3$ as in Lemma \ref{c68}. If $ y\in A_1$, one has
\[
\frac{|x|^{\mu} - |x-y|^\mu}{|x-y|^\mu} \rightarrow 0
\]
as $ |x| \rightarrow \infty$. Hence,
\[
\int_{A_1} \frac{|x|^{\mu} - |x-y|^\mu}{|x-y|^\mu} e^{\frac{2n- \mu}{2}u(y)} dy \rightarrow 0 \ \ \text{as} \ \ |x| \rightarrow \infty.
\]

Next, if $ y\in A_2$, we have $ \frac{|x|}{2} \le |y| \le \frac{3}{2}|x|$. According to \eqref{c69} and \eqref{c55}, we deduce that
\begin{equation}
\begin{aligned}
& \left| \int_{A_2} \frac{|x|^{\mu} - |x-y|^\mu}{|x-y|^\mu} e^{\frac{2n- \mu}{2}u(y)} dy \right| \\
\le & \int_{A_2} e^{\frac{2n- \mu}{2}u(y)} dy + |x|^{\mu} \int_{A_2} \frac{e^{\frac{2n- \mu}{2}u(y)}}{|x-y|^\mu} dy \\
\le & o(1) + |x|^{\mu - \frac{2n- \mu}{2}(\alpha - o(1))} \int_{A_2} \frac{1}{|x-y|^\mu}dy \\
\le & o(1) + C|x|^{n - \frac{2n- \mu}{2}\alpha + o(1)} \rightarrow 0
\nonumber
\end{aligned}
\end{equation}
as $ |x| \rightarrow \infty$.

Finally, if $ y \in A_3$, we have
\[
\frac{|x|^{\mu}}{|x-y|^\mu} \le 2^{\mu},
\]
which implies
\[
\left| \int_{A_3} \frac{|x|^{\mu} - |x-y|^\mu}{|x-y|^\mu} e^{\frac{2n- \mu}{2}u(y)} dy \right| \le C \int_{A_3}e^{\frac{2n- \mu}{2}u(y)} dy \le C\eps.
\]
In fact, we are going to give a more precise decay of $ u(x)$ as following
\begin{equation}\label{c19}
u(x) = -\alpha \ln|x| + O(1),
\end{equation}
for $ |x|$ large enough. We find
\[
  u(x) + \alpha \ln|x| = \frac{1}{\beta_n}  \int_{\mathbb{R}^n}\left[ \ln \left(\frac{|x|(|y|+1)}{|x-y|}\right)\right] v(y) e^{\frac{2n-\mu}{2}u(y)} dy + C.
\]
A direct calculation yields that
\begin{equation}
  \begin{split}
& \int_{A_2}\left[ \ln \left(\frac{|x|(|y|+1)}{|x-y|}\right)\right] v(y) e^{\frac{2n-\mu}{2}u(y)} dy \\
\leq & C\int_{A_2} (\ln (2|x|^2)) v(y) e^{\frac{2n-\mu}{2}u(y)} dy + C|x|^{o(1)-\mu - \alpha\frac{2n-\mu}{2} }\int_{A_2} \ln \frac{1}{|x-y|} dy \\
\leq & C|x|^{n+o(1)-\mu - \alpha\frac{2n-\mu}{2}}(\ln (\sqrt{2}|x|) + \ln|x|)\\
\leq & C, \nonumber
    \end{split}
\end{equation}
for $ |x|$ large enough. For $ x\in A_1 \cup A_3$, it is clear that
\begin{equation}
  \begin{aligned}
& \int_{A_1 \cup A_3}\left[ \ln \left(\frac{|x|(|y|+1)}{|x-y|}\right)\right] v(y) e^{\frac{2n-\mu}{2}u(y)} dy \\
\leq & \int_{A_1 \cup A_3} (\ln 2(|y|+1) )v(y) e^{\frac{2n-\mu}{2}u(y)} dy\\
\leq & C\int_{A_1} v(y) e^{\frac{2n-\mu}{2}u(y)} dy + C\int_{A_3} \frac{ \ln (2(|y|+1))}{|y|^\mu} e^{\frac{2n-\mu}{2}u(y)} dy \\
\leq & C.
  \nonumber
    \end{aligned}
\end{equation}
We conclude that
\[
u(x) + \alpha \ln|x| \leq C.
\]
By \eqref{c13}, we obtain \eqref{c19}. This completes the proof.
\end{proof}

\begin{lemma}\label{c58}
Suppose $ u(x)\in  C^n$ is a solution of \eqref{c29} with $ n\geq 2$. And if one sets
\be\label{c31}
\alpha:= \frac{1}{\beta_n}  \int_{\mathbb{R}^n}v(y) e^{\frac{2n-\mu}{2}u(y)} dy  < \infty, \ \int_{\mathbb{R}^n}e^{\frac{2n- \mu}{2}u(y)} dy < \infty,
\ee
then the following identity holds:
\begin{equation}\label{c27}
\alpha \left(\alpha - \frac{4n}{2n-\mu}\right) = \frac{4}{(2n-\mu)\beta_n} \int_{\mathbb{R}^n} \langle x,\nabla v(x) \rangle  e^{\frac{2n-\mu}{2}u(x)} dx.
\end{equation}
\end{lemma}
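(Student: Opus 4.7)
The plan is to compute
\[
A := \int_{\mathbb{R}^n}\langle x,\nabla u(x)\rangle\,v(x)\,e^{\frac{2n-\mu}{2}u(x)}\,dx
\]
in two different ways and equate the two expressions. I will write $f(y):=v(y)e^{\frac{2n-\mu}{2}u(y)}$, so that $\int_{\mathbb{R}^n}f\,dy=\beta_n\alpha$ by \eqref{c31}. Throughout, the asymptotic information supplied by Theorem~\ref{c61}, namely $u(x)=-\alpha\ln|x|+O(1)$, $|x|^\mu v(x)\to\beta$, and $\alpha>\frac{2n}{2n-\mu}$ (so in particular $\tfrac{2n-\mu}{2}\alpha>n$), is what justifies convergence and kills boundary terms at infinity.

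For the first evaluation, I would differentiate \eqref{c29} under the integral sign to get
\[
\nabla u(x) = -\frac{1}{\beta_n}\int_{\mathbb{R}^n}\frac{x-y}{|x-y|^2}\,f(y)\,dy,
\]
substitute this into $A$, and symmetrise the resulting double integral in $(x,y)$ via the elementary identity
\[
\frac{x\cdot(x-y)+y\cdot(y-x)}{|x-y|^2}=1.
\]
This collapses the double integral to $\tfrac{1}{2}(\beta_n\alpha)^2$, producing
\[
A=-\frac{\beta_n\alpha^2}{2}.
\]

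For the second evaluation, I would use the chain rule $\langle x,\nabla u\rangle e^{\frac{2n-\mu}{2}u}=\frac{2}{2n-\mu}\langle x,\nabla e^{\frac{2n-\mu}{2}u}\rangle$ and integrate by parts on $B_R$ against the vector field $x\,v(x)$, which gives
\[
\int_{B_R}\langle x,\nabla u\rangle v\,e^{\frac{2n-\mu}{2}u}\,dx=\frac{2}{2n-\mu}\Bigl[R\!\int_{\partial B_R}\!v\,e^{\frac{2n-\mu}{2}u}\,dS-\int_{B_R}e^{\frac{2n-\mu}{2}u}\bigl(nv+\langle x,\nabla v\rangle\bigr)\,dx\Bigr].
\]
The decay estimates force the boundary integral to be $O(R^{n-\mu-\frac{2n-\mu}{2}\alpha})=o(1)$, so letting $R\to\infty$ along a suitable sequence yields
\[
A=-\frac{2n}{2n-\mu}\beta_n\alpha-\frac{2}{2n-\mu}\int_{\mathbb{R}^n}\langle x,\nabla v\rangle e^{\frac{2n-\mu}{2}u}\,dx.
\]
Equating the two expressions for $A$ and multiplying through by $-2/\beta_n$ yields \eqref{c27} after rearrangement.

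The main obstacle is making these manipulations rigorous. The absolute integrability of the double integral in the first step and the integrability of $\langle x,\nabla v\rangle e^{\frac{2n-\mu}{2}u}$ in the second both come down to direct size counts using the decay above. The most delicate point is that when $n-1\le\mu<n$ the kernel obtained by formally differentiating $v(x)=\int|x-y|^{-\mu}e^{\frac{2n-\mu}{2}u(y)}dy$ is not locally integrable, so $\nabla v$ has to be defined carefully. I plan to handle this exactly as in Lemma~\ref{c65}, using a smooth cutoff near the diagonal together with integration by parts to give $\nabla v$ a rigorous meaning and to control its local behaviour; once this is in place, the differentiation under the integral sign, the Fubini step in the symmetrisation, and the vanishing of the boundary integral on $\partial B_R$ become standard verifications.
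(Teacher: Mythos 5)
Your proposal is correct and follows essentially the same route as the paper's proof: both evaluate $\int\langle x,\nabla u\rangle v\,e^{\frac{2n-\mu}{2}u}\,dx$ once by substituting the integral formula for $\nabla u$ and symmetrizing in $(x,y)$ to obtain $-\tfrac12\beta_n\alpha^2$, and once by integrating by parts against $x\,v(x)$ to produce the $\langle x,\nabla v\rangle$ term and the $n v$ term, then equate. The convergence issues (pointwise bounds $|x||\nabla u|,|x||\nabla v|\le C$, absolute integrability of the double integral, vanishing of the boundary term on $\partial B_R$) are handled by the same decay estimates you invoke, and the paper likewise rewrites $\nabla v$ via an integration by parts onto $\nabla u$ to sidestep the non-integrable kernel when $n-1\le\mu<n$.
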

\begin{proof} Since $ u(x)\in C^n$, then $ \nabla u(x)$ is continuous. Differentiating equation \eqref{c29} yields
\begin{equation}\label{c30}
\langle x,\nabla u(x) \rangle = - \frac{1}{\beta_n} \int_{\mathbb{R}^n} \frac{\langle x,x-y \rangle}{|x-y|^2} v(y) e^{\frac{2n-\mu}{2}u(y)} dy.
\end{equation}

Multiplying both sides of \eqref{c30} by $ v(x) e^{\frac{2n-\mu}{2}u(x)}$ and integrating the resulting equation both sides over $ B_R$ for any $ R>0$, one gets
\begin{equation}\label{c32}
\begin{aligned}
& - \frac{1}{\beta_n} \int_{B_R} v(x) e^{\frac{2n-\mu}{2}u(x)}  \int_{\mathbb{R}^n} \frac{\langle x,x-y \rangle}{|x-y|^2} v(y) e^{\frac{2n-\mu}{2}u(y)} dydx \\
& = \int_{B_R} v(x) e^{\frac{2n-\mu}{2}u(x)} \langle x,\nabla u(x) \rangle dx.
\end{aligned}
\end{equation}
By a simple calculation, we obtain
\begin{align*}
& \int_{B_R} v(x) e^{\frac{2n-\mu}{2}u(x)} \langle x,\nabla u(x) \rangle dx \\
& = \frac{2}{2n-\mu} \int_{B_R} v(x) \langle x,\nabla e^{\frac{2n-\mu}{2}u(x)} \rangle dx \\
& = - \frac{2}{2n-\mu} \int_{B_R} e^{\frac{2n-\mu}{2}u(x)} (\langle x,\nabla v(x) \rangle + nv(x)) dx +\frac{2R}{2n-\mu} \int_{\ptl B_R} v(x) e^{\frac{2n-\mu}{2}u(x)} dS.
\end{align*}
By the asymptotic behavior of $ u$, $ v$ at $ \infty$, we have
\[
\lim_{R \rightarrow \infty} R \int_{\ptl B_{R}} v(x) e^{\frac{2n-\mu}{2}u(x)} dS = 0.
\]
We claim that
\[
\int_{\mathbb{R}^n} \left| v(x) e^{\frac{2n-\mu}{2}u(x)} \langle x,\nabla u(x) \rangle \right| dx < \infty \quad \text{and} \quad \int_{\mathbb{R}^n} \left| e^{\frac{2n-\mu}{2}u(x)} \langle x,\nabla v(x) \rangle \right| dx < \infty.
\]
Then, letting $ R \rightarrow \infty$, one has
\begin{equation}\label{c35}
\begin{aligned}
RHS & = - \frac{2}{2n-\mu} \int_{\mathbb{R}^n} e^{\frac{2n-\mu}{2}u(x)} (\langle x,\nabla v(x) \rangle + nv(x)) dx \\
& = - \frac{2}{2n-\mu} \int_{\mathbb{R}^n} e^{\frac{2n-\mu}{2}u(x)} \langle x,\nabla v(x) \rangle dx - \frac{2n}{2n-\mu} \beta_n \alpha.
\end{aligned}
\end{equation}
In order to prove this claim it is sufficient to prove
\begin{equation}\label{c57}
|x| |\nabla u(x)| \le C \quad \text{and} \quad |x| |\nabla v(x)| \le C .
\end{equation}
By \eqref{c30}, we know that
\[
|x| |\nabla u(x)| \le \frac{1}{\beta_n} \int_{\mathbb{R}^n} \frac{|x|}{|x-y|} v(y) e^{\frac{2n-\mu}{2}u(y)} dy.
\]
For $ |x|$ large, we divide $ \mathbb{R}^n$ into two parts:
\[
A_1 = \{y| |x-y| \le  \frac{|x|}{2}\}, \ A_2= \{y| |x-y| >  \frac{|x|}{2}\}.
\]
If $ y \in A_1$, by Theorem \ref{c61}, we deduce that
\begin{equation}
\begin{aligned}
& \int_{A_1} \frac{|x|}{|x-y|} v(y) e^{\frac{2n-\mu}{2}u(y)} dy \\
\le & C|x|^{1-\mu - \alpha \frac{2n-\mu}{2}}\int_{A_1} \frac{1}{|x-y|} dy \\
\le & C|x|^{n-\mu - \alpha \frac{2n-\mu}{2}} \rightarrow 0 \nonumber
\end{aligned}
\end{equation}
as $ |x| \rightarrow \infty$.

If $ y\in A_2$, it is clear that
\[
\int_{A_2} \frac{|x|}{|x-y|} v(y) e^{\frac{2n-\mu}{2}u(y)} dy \le 2 \int_{A_2} v(y) e^{\frac{2n-\mu}{2}u(y)} dy \le C.
\]
Thus, one gets $ |x| |\nabla u(x)| \le C$.

For $ |\nabla v(x)|$, we consider the case $ 0< \mu < n-1$. For $ |x|$ large enough, one has
\begin{equation}
  \begin{split}
  |x| |\nabla v(x)| & \leq \mu \int_{\mathbb{R}^n} \frac{|x| e^{\frac{2n-\mu}{2}u(y)}}{|x-y|^{\mu+1}} dy \\
  & = \mu \int_{A_1 } \frac{|x| e^{\frac{2n-\mu}{2}u(y)}}{|x-y|^{\mu+1}} dy + \mu \int_{A_2 } \frac{|x| e^{\frac{2n-\mu}{2}u(y)}}{|x-y|^{\mu+1}} dy \\
  & \leq C |x|^{1-\frac{2n-\mu}{2}\alpha } \int_{A_1 } \frac{1} {|x-y|^{\mu+1}} dy + C |x|^{-\mu} \int_{A_2 } e^{\frac{2n-\mu}{2}u(y)}dy \\
  & \leq C |x|^{n-\mu-\frac{2n-\mu}{2}\alpha }  + C |x|^{-\mu} \\
  & \leq C. \nonumber
  \end{split}
\end{equation}
For the case $ n-1 \leq \mu < n$, we can write
\[
\nabla v(x)=\frac{2n-\mu}{2} \int_{\mathbb{R}^n}\frac{e^{\frac{2n-\mu}{2} u(x-y)} \nabla_x u(x-y)}{|y|^\mu} dy.
\]
Then, for $ |x|$ large enough, we divide $ \mathbb{R}^n$ into four parts:
\[
D_1=\{y| |y| \geq 2|x|\}, \quad D_2 = \left\{y||y-x|>\frac{|x|}{2}, |y| \leq 2|x| \right\}
\]
\[
D_3 = \left\{y|K<|y-x| \leq \frac{|x|}{2}\right\}, \quad D_4 = \{y| |y-x| \leq K\},
\]
where $ K > 0$ is large. Theorem \ref{c61} implies that
\begin{equation}
  \begin{split}
    |x||\nabla v(x)|
& \leq \frac{2n-\mu}{2} \int_{\mathbb{R}^n}\frac{e^{\frac{2n-\mu}{2} u(x-y)} |\nabla_x u(x-y)| |x|}{|y|^\mu} dy \\
& \leq \frac{C}{|x|^{\mu-1}} \int_{D_1}e^{\frac{2n-\mu}{2} u(x-y)} |\nabla_x u(x-y)|  dy
+ \frac{C}{|x|^{\frac{2n-\mu}{2}\alpha }}\int_{D_2}\frac{1}{|y|^\mu} dy  \\
& \quad +  \frac{C}{|x|^{\mu-1}}  \int_{D_3 \cup D_4}e^{\frac{2n-\mu}{2} u(x-y)} |\nabla_x u(x-y)|  dy \\
& \leq  \frac{C}{|x|^{\mu-1}} \int_{|y-x| \geq |x|}e^{\frac{2n-\mu}{2} u(x-y)} |\nabla_x u(x-y)|  dy + \frac{C|x|^{n-\mu}}{|x|^{\frac{2n-\mu}{2}\alpha }} +\frac{C}{|x|^{\mu-1}} \\
& \leq \frac{C|x|^{n-\mu}}{|x|^{\frac{2n-\mu}{2}\alpha }} +\frac{C}{|x|^{\mu-1}}.
\nonumber
    \end{split}
\end{equation}
By \eqref{c55}, since $ n\geq 2$ and $ \mu
\geq 1$, we obtain
\[
|x||\nabla v(x)| \leq C.
\]
Therefore, we get \eqref{c57}.

With $ x = \frac{1}{2}((x-y)+(x+y))$, for the left hand side of \eqref{c32}, one has the following identity
\begin{equation}\label{c33}
\begin{aligned}
& - \frac{1}{\beta_n} \int_{B_R} v(x) e^{\frac{2n-\mu}{2}u(x)}  \int_{\mathbb{R}^n} \frac{\langle x,x-y \rangle}{|x-y|^2} v(y) e^{\frac{2n-\mu}{2}u(y)} dydx \\
& = - \frac{1}{2\beta_n} \int_{B_R} v(x) e^{\frac{2n-\mu}{2}u(x)} \int_{\mathbb{R}^n} v(y) e^{\frac{2n-\mu}{2}u(y)} dydx \\
& \quad - \frac{1}{2\beta_n} \int_{B_R} v(x) e^{\frac{2n-\mu}{2}u(x)} \int_{\mathbb{R}^n} \frac{\langle x+y,x-y \rangle}{|x-y|^2} v(y) e^{\frac{2n-\mu}{2}u(y)} dydx.
\end{aligned}
\end{equation}
We claim that
\[ \int_{\mathbb{R}^n}\int_{\mathbb{R}^n} \left| \frac{\langle x,x-y \rangle}{|x-y|^2} v(x) e^{\frac{2n-\mu}{2}u(x)} v(y) e^{\frac{2n-\mu}{2}u(y)} \right| dydx < \infty.
\]
Then, it is easy to see that the last term in \eqref{c33} will vanish when one takes the limit $ R \rightarrow \infty$ simply by changing variables $ x$ and $ y$. Thus the left hand side gives
\begin{equation}\label{c34}
LHS=-\frac{1}{2} \beta_n \alpha^2.
\end{equation}
A simple calculation yields
\begin{eqnarray*}
  &&\int_{\mathbb{R}^n}\int_{\mathbb{R}^n} \left| \frac{\langle x,x-y \rangle}{|x-y|^2} v(x) e^{\frac{2n-\mu}{2}u(x)} v(y) e^{\frac{2n-\mu}{2}u(y)} \right| dydx \\
  & \leq &\int_{\mathbb{R}^n} \int_{|y-x|\geq \frac{|x|}{2}} \frac{|x|}{|x-y|} v(x) e^{\frac{2n-\mu}{2}u(x)} v(y) e^{\frac{2n-\mu}{2}u(y)}  dydx + \int_{B_{R_0}} \int_{|y-x|< \frac{|x|}{2}} \frac{v(y) e^{\frac{2n-\mu}{2}u(y)} }{|x-y|} dydx \\
  && + \int_{B_{R_0}^c} \int_{|y-x|< \frac{|x|}{2}} \frac{|x|}{|x-y|} v(x) e^{\frac{2n-\mu}{2}u(x)} v(y) e^{\frac{2n-\mu}{2}u(y)}  dydx \\
  &\leq & C+ C \int_{B_{R_0}} \int_{|y-x|< \frac{|x|}{2}} \frac{1}{|x-y|} dydx + C\int_{B_{R_0}^c} \int_{|y-x|< \frac{|x|}{2}} \frac{v(x) e^{\frac{2n-\mu}{2}u(x)}}{|x-y|} |x|^{1-\mu -\alpha \frac{2n-\mu}{2}}dydx \\
  &\leq & C +  C\int_{B_{R_0}^c}  v(x) e^{\frac{2n-\mu}{2}u(x)} |x|^{n-\mu -\alpha \frac{2n-\mu}{2}}dx \\
 & \leq & C,
\end{eqnarray*}
for some large constant $ R_0 > 0$. Hence, \eqref{c27} follows from \eqref{c35} and \eqref{c34}.
\end{proof}

Using the asymptotic behavior of $ u$, $ v$ at $ \infty$, we work out the exact value of $ \alpha$.
\begin{lemma}\label{c65}
Suppose $ u(x)\in  C^n$ is a solution of \eqref{c29} with $ n \geq 2$. And if one sets
\[
\alpha:= \frac{1}{\beta_n}  \int_{\mathbb{R}^n}v(y) e^{\frac{2n-\mu}{2}u(y)} dy  < \infty, \ \int_{\mathbb{R}^n}e^{\frac{2n- \mu}{2}u(y)} dy < \infty,
\]
then
\[
\alpha =2.
\]
\end{lemma}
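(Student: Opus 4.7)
The plan is to evaluate the right-hand side of the Pohozaev identity \eqref{c27} explicitly and show that it equals $-\frac{2\mu\alpha}{2n-\mu}$, which, when combined with \eqref{c27}, forces $\alpha=2$.

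Formally, differentiating $v(x)=\int_{\mathbb{R}^n} |x-y|^{-\mu} e^{\frac{2n-\mu}{2}u(y)} dy$ under the integral sign gives
\[
\nabla v(x) = -\mu \int_{\mathbb{R}^n} \frac{x-y}{|x-y|^{\mu+2}} e^{\frac{2n-\mu}{2}u(y)} dy,
\]
so that
\[
\int_{\mathbb{R}^n} \langle x,\nabla v(x)\rangle e^{\frac{2n-\mu}{2}u(x)} dx = -\mu \int_{\mathbb{R}^n}\int_{\mathbb{R}^n} \frac{\langle x, x-y\rangle}{|x-y|^{\mu+2}} e^{\frac{2n-\mu}{2}u(x)} e^{\frac{2n-\mu}{2}u(y)} dy\,dx.
\]
Swapping $x$ and $y$ in this double integral and averaging the two expressions, the numerator $\langle x, x-y\rangle$ becomes $\tfrac12 \langle x-y, x-y\rangle = \tfrac12 |x-y|^2$, so that the entire double integral collapses to
\[
-\frac{\mu}{2}\int_{\mathbb{R}^n}\int_{\mathbb{R}^n} \frac{e^{\frac{2n-\mu}{2}u(x)}e^{\frac{2n-\mu}{2}u(y)}}{|x-y|^{\mu}} dy\,dx = -\frac{\mu}{2}\int_{\mathbb{R}^n} v(x) e^{\frac{2n-\mu}{2}u(x)} dx = -\frac{\mu}{2}\beta_n\alpha.
\]
Plugging this into \eqref{c27} gives $\alpha(\alpha - \tfrac{4n}{2n-\mu}) = -\tfrac{2\mu\alpha}{2n-\mu}$, from which $\alpha = \tfrac{4n-2\mu}{2n-\mu} = 2$ after dividing by $\alpha>0$.

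The main obstacle is the rigorous justification of the identity for $\nabla v$ together with the symmetrization/Fubini step, because when $n-1\le \mu < n$ the naive integral representation for $\nabla v$ is not absolutely convergent. The plan to handle this is to avoid the pointwise differentiated formula and instead work with the identity
\[
\nabla v(x) = \frac{2n-\mu}{2}\int_{\mathbb{R}^n} \frac{e^{\frac{2n-\mu}{2}u(x-y)} \nabla_x u(x-y)}{|y|^{\mu}} dy,
\]
as already used in Lemma \ref{c58} in the derivation of $|x||\nabla v(x)|\le C$. Combined with the decay estimates $|x||\nabla u(x)|\le C$, $u(x)=-\alpha\ln|x|+O(1)$ and $v(x)\sim \beta|x|^{-\mu}$ from Theorem \ref{c61}, one verifies absolute integrability of all double integrals appearing after a cutoff, so that Fubini applies on $B_R\times B_R$; the $R\to\infty$ limit is then controlled using these decay rates and the finite-energy condition \eqref{c2}.

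For the borderline case $n-1\le \mu<n$, I would introduce a cutoff $\chi_\eta$ that truncates the singularity at $y=x$ in the kernel $|x-y|^{-\mu-1}$, perform an integration by parts in $y$ to transfer a derivative onto $e^{\frac{2n-\mu}{2}u(y)}$, apply the symmetrization to the regularized kernel (where the integrand is absolutely integrable), and then pass $\eta\to 0$; the boundary term and the error terms vanish thanks to $|\nabla u|\le C|x|^{-1}$ and the integrability in \eqref{c2}. Once the identity $\int \langle x,\nabla v\rangle e^{\frac{2n-\mu}{2}u} dx = -\frac{\mu}{2}\beta_n\alpha$ is established, the algebraic conclusion $\alpha=2$ is immediate.
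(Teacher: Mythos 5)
Your proposal is essentially the paper's proof: you plug the Pohozaev identity \eqref{c27} from Lemma~\ref{c58} into an explicit evaluation of $\int_{\mathbb{R}^n}\langle x,\nabla v\rangle e^{\frac{2n-\mu}{2}u}\,dx$, obtained by regularizing the singular kernel, symmetrizing in $x$ and $y$ so that $\langle x,x-y\rangle$ averages to $\tfrac12|x-y|^2$ and cancels the singularity, and using the representation $\nabla v(x)=\frac{2n-\mu}{2}\int\frac{e^{\frac{2n-\mu}{2}u(x-y)}\nabla u(x-y)}{|y|^\mu}\,dy$ from Lemma~\ref{c58} to control the borderline $n-1\le\mu<n$, arriving at $\int\langle x,\nabla v\rangle e^{\frac{2n-\mu}{2}u}\,dx=-\tfrac{\mu}{2}\beta_n\alpha$ and hence $\alpha=2$. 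The only difference is cosmetic: the paper regularizes by excising $B_\varepsilon(x)$ to define $v_\varepsilon$ and then subtracting the diagonal value $\chi_{B_\delta(x)}(y)e^{\frac{2n-\mu}{2}u(x)}$ to obtain an absolutely convergent double integral before swapping $x$ and $y$ (leading to the $II_1$, $II_2$, $III_1$, $III_2$ bookkeeping), whereas you propose a symmetric principal-value cutoff $\{|x-y|>\eta\}$, which performs the same cancellation at once.
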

\begin{proof}
Let
\[
v_{\eps}(x) := \int_{\mathbb{R}^n\setminus B_{\eps}(x)} \frac{e^{\frac{2n-\mu}{2}u(y)}}{|x-y|^{\mu}} dy =  \int_{\mathbb{R}^n\setminus B_{\eps}(0)} \frac{e^{\frac{2n-\mu}{2}u(x-y)}}{|y|^{\mu}} dy.
\]
We claim that
\[
\lim_{\eps \rightarrow 0^+}  \int_{\mathbb{R}^n} \langle x,\nabla ( v(x)-v_{\eps}(x)) \rangle  e^{\frac{2n-\mu}{2}u(x)} dx =0.
\]
By using Lemma \ref{c25}, for any $ \varphi(x) \in C_c^\infty(\mathbb{R}^n)$, we conclude that
\begin{equation}\label{c54}
\begin{aligned}
&\lim_{\eps \rightarrow 0^+}\int_{\mathbb{R}^n} (v(x)-v_{\eps}(x)) |\nabla \varphi(x)| dx \\
= & \lim_{\eps \rightarrow 0^+}\int_{\mathbb{R}^n} \int_{B_{\eps}(x)} \frac{e^{\frac{2n-\mu}{2}u(y)}}{|x-y|^{\mu}} dy |\nabla \varphi(x)| dx \\
\le& C\lim_{\eps \rightarrow 0^+}\int_{supp \varphi} \int_{B_{\eps}(x)} \frac{1}{|x-y|^{\mu}} dy dx \\
= & 0.
\end{aligned}
\end{equation}

Take a cutoff function $ \eta_R(x) \in C_c^\infty(B_{2R})$ and $ 0 \le \eta_R(x) \le 1$ such that
\begin{equation}
\eta_R(x) =
\left\{
\begin{aligned}
& 1, \quad x\in B_R \\
& 0, \quad x \in B_{2R}^c,
\end{aligned}
\right. \nonumber
\end{equation}
where $ R$ is large enough. By \eqref{c54}, we get
\[
\lim_{\eps \rightarrow 0^+} \int_{\mathbb{R}^n} \langle x,\nabla ( v(x)-v_{\eps}(x)) \rangle  e^{\frac{2n-\mu}{2}u(x)} \eta_R(x) dx =0.
\]
For $ |x| \ge R$, by \eqref{c57}, a simple calculation yields
\begin{equation}
\begin{aligned}
| \langle x, \nabla (v(x)-v_{\eps}(x)) \rangle | = & \left| \frac{2n-\mu}{2} \int_{B_\eps} \frac{ \langle x, \nabla_x u(x-y) \rangle e^{\frac{2n-\mu}{2}u(x-y)} }{|y|^\mu} dy  \right| \\
\le & C |x|^{-\alpha \frac{2n-\mu}{2}} \int_{B_\eps} \frac{1}{|y|^{\mu}} dy \\
\le & C\eps^{n-\mu} |x|^{-\alpha \frac{2n-\mu}{2}}.
\nonumber
\end{aligned}
\end{equation}
Then, we deduce that
\begin{equation}
\begin{aligned}
& \int_{\mathbb{R}^n \setminus B_R}\left|  \langle x,\nabla ( v(x)-v_{\eps}(x)) \rangle  e^{\frac{2n-\mu}{2}u(x)} (1-\eta_R(x)) \right| dx \\
\le & C\eps^{n-\mu} \int_{\mathbb{R}^n \setminus B_R}  |x|^{-\alpha(2n-\mu)} dx \\
\le & C \eps^{n-\mu} R^{n-\alpha(2n-\mu)}.
\nonumber
\end{aligned}
\end{equation}
Thus, one obtains
\begin{equation}
\begin{aligned}
& \lim_{\eps \rightarrow 0^+} \int_{\mathbb{R}^n} \langle x,\nabla ( v(x)-v_{\eps}(x)) \rangle  e^{\frac{2n-\mu}{2}u(x)} dx \\
=&  \lim_{\eps \rightarrow 0^+}  \int_{\mathbb{R}^n} \langle x,\nabla ( v(x)-v_{\eps}(x)) \rangle  e^{\frac{2n-\mu}{2}u(x)}( \eta_R(x) +(1-\eta_R(x))) dx  \\
=& 0.
\nonumber
\end{aligned}
\end{equation}
That is
\begin{equation}\label{c64}
\int_{\mathbb{R}^n} \langle x,\nabla v(x) \rangle  e^{\frac{2n-\mu}{2}u(x)} dx =  \lim_{\eps \rightarrow 0^+} \int_{\mathbb{R}^n} \langle x,\nabla v_{\eps}(x) \rangle  e^{\frac{2n-\mu}{2}u(x)} dx.
\end{equation}
Now, we give the representation formula for $ \nabla v_{\eps}(x)$. We have
\begin{equation}
\begin{aligned}
\nabla v_{\eps}(x) = & \nabla_x \left( \int_{\mathbb{R}^n\setminus B_{\eps}} \frac{e^{\frac{2n-\mu}{2}u(x-y)}}{|y|^{\mu}} dy \right) \\
= & \int_{\mathbb{R}^n\setminus B_{\eps}} \frac{\nabla_x \left( e^{\frac{2n-\mu}{2}u(x-y)} \right) }{|y|^{\mu}} dy \\
= &\int_{\mathbb{R}^n\setminus B_{\eps}(x)} \frac{\nabla \left( e^{\frac{2n-\mu}{2}u(y)}\right)}{|x-y|^{\mu}} dy \\
= & \lim_{R \rightarrow \infty} \int_{B_R(x) \setminus B_{\eps}(x)} \frac{\nabla \left( e^{\frac{2n-\mu}{2}u(y)}\right)}{|x-y|^{\mu}} dy \\
= & \lim_{R \rightarrow \infty} \int_{\ptl B_R(x) } \frac{ e^{\frac{2n-\mu}{2}u(y)} \nu}{|x-y|^{\mu}} dS - \int_{\ptl B_\eps(x) } \frac{ e^{\frac{2n-\mu}{2}u(y)} \nu}{|x-y|^{\mu}} dS \\
& -\mu \lim_{R \rightarrow \infty} \int_{B_R(x) \setminus B_{\eps}(x)} \frac{ e^{\frac{2n-\mu}{2}u(y)} (x-y)}{|x-y|^{\mu+2}} dy \\
:= & V^1 - V_\eps^2 - V_\eps^3,
\nonumber
\end{aligned}
\end{equation}
where $ \nu = \frac{y-x}{|y-x|}$ is the unit outer normal direction. For the integral $ V^1$, there holds
\[
\lim_{R \rightarrow \infty}  \int_{\ptl B_R(x) } \left| \frac{ e^{\frac{2n-\mu}{2}u(y)} \nu}{|x-y|^{\mu}} \right|dS \le C \lim_{R \rightarrow \infty} \frac{1}{R^{\mu}} \int_{\ptl B_R(x) } e^{\frac{2n-\mu}{2}u(y)} dy  =0.
\]
Thus we have $ \nabla v_{\eps}(x) = -V^2_{\eps}-V^3_{\eps} $. For the integral $ V_\eps^2$, there exists a $ \xi(y) \in B_{\eps}(x)$ such that
\begin{equation}
\begin{aligned}
\int_{\ptl B_\eps(x) } \frac{ e^{\frac{2n-\mu}{2}u(y)} \nu}{|x-y|^{\mu}} dS = & \int_{\ptl B_\eps(x) } \frac{ e^{\frac{2n-\mu}{2}u(y)} - e^{\frac{2n-\mu}{2}u(x)} }{|x-y|^{\mu}} \cdot\frac{y-x}{|y-x|} dS \\
= & \frac{2n-\mu}{2} \int_{\ptl B_\eps(x) } \frac{ e^{\frac{2n-\mu}{2}u(\xi(y))}   \langle \nabla u(\xi(y)), y-x  \rangle }{|x-y|^{\mu}} \cdot\frac{y-x}{|y-x|} dS. \\
\nonumber
\end{aligned}
\end{equation}
If $ |x| \le R_0$ and $ R_0$ is large, there exists a constant $ C>0 $ such that
\begin{equation}
\begin{aligned}
& \int_{\ptl B_\eps(x) } \left| \frac{ e^{\frac{2n-\mu}{2}u(\xi(y))}   \langle \nabla u(\xi(y)), y-x  \rangle }{|x-y|^{\mu}} \cdot\frac{y-x}{|y-x|} \right| dS \\
\le & C \int_{\ptl B_\eps(x) } \frac{1}{|x-y|^{\mu-1}} dS \\
\le & C \eps^{n-\mu}.
\nonumber
\end{aligned}
\end{equation}
If $ |x| \ge R_0$, we get
\begin{equation}
\begin{aligned}
& \int_{\ptl B_\eps(x) } \left| \frac{ e^{\frac{2n-\mu}{2}u(\xi(y))}   \langle \nabla u(\xi(y)), y-x  \rangle }{|x-y|^{\mu}} \cdot\frac{y-x}{|y-x|} \right| dS \\
\le & C |x|^{-1-\alpha \frac{2n-\mu}{2} } \int_{\ptl B_\eps(x) } \frac{1}{|x-y|^{\mu-1}} dS \\
\le & C \eps^{n-\mu}|x|^{-1-\alpha \frac{2n-\mu}{2} }.
\nonumber
\end{aligned}
\end{equation}
By a simple calculation, we find
\begin{equation}\label{c66}
\begin{aligned}
& \lim_{\eps \rightarrow 0^+} \left| \int_{\mathbb{R}^n} \langle x, V^2_\eps \rangle  e^{\frac{2n-\mu}{2}u(x)} dx \right| \\
=&\lim_{\eps \rightarrow 0^+}\left| \int_{\mathbb{R}^n} \frac{2n-\mu}{2} \int_{\ptl B_\eps(x) } \frac{ e^{\frac{2n-\mu}{2}u(\xi(y))}   \langle \nabla u(\xi(y)), y-x  \rangle }{|x-y|^{\mu}} \cdot\frac{ \langle x,y-x\rangle}{|y-x|} dS e^{\frac{2n-\mu}{2}u(x)} dx \right|\\
\le & C \lim_{\eps \rightarrow 0^+} \int_{B_{R_0}}  \eps^{n-\mu}|x| e^{\frac{2n-\mu}{2}u(x)}  dx
+ C\lim_{\eps \rightarrow 0^+} \int_{B^c_{R_0}} \eps^{n-\mu}|x|^{-\alpha \frac{2n-\mu}{2} }   e^{\frac{2n-\mu}{2}u(x)} dx \\
=& 0.
\end{aligned}
\end{equation}
Therefore, with \eqref{c64} and \eqref{c66}, one has
\begin{equation}
\begin{aligned}
& \int_{\mathbb{R}^n} \langle x,\nabla v(x) \rangle  e^{\frac{2n-\mu}{2}u(x)} dx \\
= & -\mu  \lim_{\eps \rightarrow 0^+} \int_{\mathbb{R}^n} \int_{\mathbb{R}^n\setminus B_{\eps}(x)} \frac{\langle x,x-y \rangle e^{\frac{2n-\mu}{2}u(y)} }{|x-y|^{\mu+2}} dy e^{\frac{2n-\mu}{2}u(x)} dx \\
= & -\mu \lim_{\eps \rightarrow 0^+} \int_{\mathbb{R}^n}  \int_{\mathbb{R}^n\setminus B_{\eps}(x)} \frac{\langle x,x-y \rangle \left( e^{\frac{2n-\mu}{2}u(y)} - \chi_{B_{\delta}(x)}(y) e^{\frac{2n-\mu}{2}u(x)} \right)}{|x-y|^{\mu+2}} dy  e^{\frac{2n-\mu}{2}u(x)} dx
\nonumber
\end{aligned}
\end{equation}
for $ \delta > 0$ small enough. For any $ \Omega \subset \mathbb{R}^n$, we define
\begin{equation}
\chi_{\Omega}(x) =
\left\{
\begin{aligned}
& 1, \quad x\in \Omega \\
& 0, \quad x \in \Omega^c.
\end{aligned}
\right. \nonumber
\end{equation}

We first prove that the above integral is absolutely integrable in $ \mathbb{R}^n \times \mathbb{R}^n$. We consider that
\begin{equation}
\begin{aligned}
& \int_{\mathbb{R}^n}\int_{\mathbb{R}^n} \left| \frac{\langle x,x-y \rangle \left( e^{\frac{2n-\mu}{2}u(y)} - \chi_{B_{\delta}(x)}(y) e^{\frac{2n-\mu}{2}u(x)} \right)}{|x-y|^{\mu+2}} e^{\frac{2n-\mu}{2}u(x)}  \right| dy dx \\
\le & \int_{\mathbb{R}^n}\int_{B_{\delta}(x)} \frac{|x| \left| e^{\frac{2n-\mu}{2}u(y)} - e^{\frac{2n-\mu}{2}u(x)} \right|}{|x-y|^{\mu+1}} e^{\frac{2n-\mu}{2}u(x)} dydx + \int_{\mathbb{R}^n}\int_{ B^c_{\delta}(x)} \frac{|x| e^{\frac{2n-\mu}{2}u(y)} }{|x-y|^{\mu+1}} e^{\frac{2n-\mu}{2}u(x)} dydx \\
:= & I_1 + I_2
\nonumber
\end{aligned}
\end{equation}
By \eqref{c55}, \eqref{c19} and \eqref{c57}, we observe that
\begin{equation}
\begin{aligned}
I_1 \le & C\int_{\mathbb{R}^n}\int_{B_{\delta}(x)} \frac{|x||\nabla u(\xi(y))| e^{\frac{2n-\mu}{2}u(\xi(y))}}{|x-y|^{\mu}} e^{\frac{2n-\mu}{2}u(x)} dydx \\
\le & C\int_{B_{R_0}}\int_{B_{\delta}(x)} \frac{ e^{\frac{2n-\mu}{2}u(x)}}{|x-y|^{\mu}} dydx + C\int_{B^c_{R_0}}\int_{B_{\delta}(x)} \frac{|x|^{ - \frac{2n-\mu}{2}\alpha}}{|x-y|^{\mu}} e^{\frac{2n-\mu}{2}u(x)} dydx\\
\le & C\delta^{n-\mu} \int_{B_{R_0}}e^{\frac{2n-\mu}{2}u(x)}dx + C\delta^{n-\mu} \int_{B^c_{R_0}}|x|^{ - (2n-\mu)\alpha} dx \\
\le & C,
\nonumber
\end{aligned}
\end{equation}
where $ \xi(y) \in B_{\delta}(x)$. We divide $ \mathbb{R}^n \setminus B_{\delta}(x)$ into two parts:
\[
A_1 = \left\{y | \delta \le |x-y| \le  \frac{|x|}{2}\right\}, \ A_2= \left\{y| |x-y| >  \frac{|x|}{2}\right\}.
\]
For $ y\in A_1$, We deduce that
\begin{equation}
\begin{aligned}
& \int_{\mathbb{R}^n}\int_{A_1} \frac{|x| e^{\frac{2n-\mu}{2}u(y)} }{|x-y|^{\mu+1}} e^{\frac{2n-\mu}{2}u(x)} dydx \\
\le & \int_{B_{R_0}}\int_{A_1} \frac{C}{|x-y|^{\mu+1}}dy |x| e^{\frac{2n-\mu}{2}u(x)} dx  + \int_{B^c_{R_0}}\int_{A_1} \frac{C}{|x-y|^{\mu+1}}dy |x|^{1-\alpha (2n-\mu)} dx \\
\le & C \int_{B_{R_0}} |x|^{n-\mu} e^{\frac{2n-\mu}{2}u(x)} dx + C \delta^{n-\mu-1}\int_{B_{R_0}} |x| e^{\frac{2n-\mu}{2}u(x)} dx + \int_{B^c_{R_0}} |x|^{n-\mu -\alpha (2n-\mu)} dx \\
& + C \delta^{n-\mu-1}\int_{B^c_{R_0}} |x|^{1 -\alpha (2n-\mu)} dx \\
\le & C + C \delta^{n-\mu-1} \\
\le & C
\nonumber
\end{aligned}
\end{equation}
for $ R_0$ large enough. In order to obtain the above inequality, we used the asymptotic behavior of $ u$ at $ \infty$ and $ \alpha > \frac{2n}{2n- \mu}$. For $ y \in A_2$, one gets
\begin{equation}
\begin{aligned}
\int_{\mathbb{R}^n}\int_{A_2} \frac{|x| e^{\frac{2n-\mu}{2}u(y)} }{|x-y|^{\mu+1}} e^{\frac{2n-\mu}{2}u(x)} dydx
\le & 2 \int_{\mathbb{R}^n}\int_{A_2} \frac{ e^{\frac{2n-\mu}{2}u(y)} }{|x-y|^{\mu}} dy e^{\frac{2n-\mu}{2}u(x)} dx \\
\le & 2 \int_{\mathbb{R}^n}v(x) e^{\frac{2n-\mu}{2}u(x)} dx \\
\le & C.
\nonumber
\end{aligned}
\end{equation}
Hence,
\[
I_1+ I_2 \le C.
\]
We consider that
\begin{equation}\label{c60}
\begin{aligned}
& \int_{\mathbb{R}^n} \langle x,\nabla v(x) \rangle  e^{\frac{2n-\mu}{2}u(x)} dx \\
= & -\mu \int_{\mathbb{R}^n} \int_{\mathbb{R}^n} \frac{\langle x,x-y \rangle \left( e^{\frac{2n-\mu}{2}u(y)} - \chi_{B_{\delta}(x)}(y) e^{\frac{2n-\mu}{2}u(x)} \right)}{|x-y|^{\mu+2}} e^{\frac{2n-\mu}{2}u(x)} dy dx \\
= & -\mu \int_{\mathbb{R}^n} \int_{\mathbb{R}^n} \frac{ e^{\frac{2n-\mu}{2}u(y)} - \chi_{B_{\delta}(x)}(y) e^{\frac{2n-\mu}{2}u(x)} }{|x-y|^{\mu}} e^{\frac{2n-\mu}{2}u(x)} dy dx \\
& -\mu \int_{\mathbb{R}^n} \int_{\mathbb{R}^n} \frac{\langle y,x-y \rangle \left( e^{\frac{2n-\mu}{2}u(y)} - \chi_{B_{\delta}(x)}(y) e^{\frac{2n-\mu}{2}u(x)} \right)}{|x-y|^{\mu+2}} e^{\frac{2n-\mu}{2}u(x)} dy dx \\
:=& II_1 + II_2
\end{aligned}
\end{equation}

Using the Lebesgue's dominated convergence theorem for $ II_1$, one can easily deduce that
\begin{equation}\label{c62}
\begin{aligned}
\lim_{\delta \rightarrow 0^+}\int_{\mathbb{R}^n} \int_{\mathbb{R}^n} \frac{ e^{\frac{2n-\mu}{2}u(y)} - \chi_{B_{\delta}(x)}(y) e^{\frac{2n-\mu}{2}u(x)} }{|x-y|^{\mu}} e^{\frac{2n-\mu}{2}u(x)} dy dx = & \int_{\mathbb{R}^n} \int_{\mathbb{R}^n} \frac{ e^{\frac{2n-\mu}{2}u(y)}}{|x-y|^{\mu}} e^{\frac{2n-\mu}{2}u(x)} dy dx \\
=& \beta_n \alpha
\end{aligned}
\end{equation}
We prove that the dominant function of $ II_1$ is integrable. That is
\begin{equation}
\begin{aligned}
& \int_{\mathbb{R}^n} \int_{\mathbb{R}^n} \left( \frac{ e^{\frac{2n-\mu}{2}u(y)}e^{\frac{2n-\mu}{2}u(x)} }{|x-y|^{\mu}} + \frac{\chi_{B_{1}(x)}(y) e^{(2n-\mu)u(x)} }{|x-y|^{\mu}} \right) dy dx \\
=& \int_{\mathbb{R}^n} v(x) e^{\frac{2n-\mu}{2}u(x)}dx + \int_{\mathbb{R}^n} \int_{B_{1}(x)} \frac{ e^{(2n-\mu)u(x)} }{|x-y|^{\mu}} dy dx \\
\le & C  + C \int_{\mathbb{R}^n} e^{(2n-\mu)u(x)} dx \\
\le & C.
\nonumber
\end{aligned}
\end{equation}
By a simple calculation, we obtain
\begin{equation}
\begin{aligned}
II_2
= & \mu \int_{\mathbb{R}^n} \int_{\mathbb{R}^n} \frac{\langle y,y-x \rangle \left( e^{\frac{2n-\mu}{2}u(y)} - \chi_{B_{\delta}(x)}(y) e^{\frac{2n-\mu}{2}u(x)} \right)}{|x-y|^{\mu+2}} \left( e^{\frac{2n-\mu}{2}u(x)} - \chi_{B_{\delta}(y)}(x) e^{\frac{2n-\mu}{2}u(y)}  \right) dy dx \\
& -\mu \int_{\mathbb{R}^n} \int_{\mathbb{R}^n} \frac{\langle y,x-y \rangle \left( e^{\frac{2n-\mu}{2}u(y)} - \chi_{B_{\delta}(x)}(y) e^{\frac{2n-\mu}{2}u(x)} \right)}{|x-y|^{\mu+2}} \chi_{B_{\delta}(y)}(x) e^{\frac{2n-\mu}{2}u(y)}  dy dx \\
= & -\mu \int_{\mathbb{R}^n} \int_{\mathbb{R}^n} \frac{\langle y,x-y \rangle \left( e^{\frac{2n-\mu}{2}u(x)} - \chi_{B_{\delta}(y)}(x) e^{\frac{2n-\mu}{2}u(y)}  \right)}{|x-y|^{\mu+2}} e^{\frac{2n-\mu}{2}u(y)} dy dx \\
& +\mu \int_{\mathbb{R}^n} \int_{\mathbb{R}^n} \frac{\langle y,x-y \rangle \left( \chi_{B_{\delta}(x)}(y) e^{(2n-\mu)u(x)} - \chi_{B_{\delta}(y)}(x) e^{(2n-\mu)u(y)}  \right)}{|x-y|^{\mu+2}} dy dx. \\
:=&-III_1+III_2.
\nonumber
\end{aligned}
\end{equation}
According to \eqref{c60}, we know that
\[
III_1= II_1 + II_2.
\]

Similarly, we deduce that the integral $ III_2$ is absolutely integrable
\[
\mu \int_{\mathbb{R}^n} \int_{\mathbb{R}^n} \left| \frac{\langle y,x-y \rangle \chi_{B_{1}(x)}(y) \left(e^{(2n-\mu)u(x)} - e^{(2n-\mu)u(y)}  \right)}{|x-y|^{\mu+2}} \right| dy dx < \infty.
\]
Using the Lebesgue's dominated convergence theorem, we have
\begin{equation}\label{c63}
\lim_{\delta \rightarrow 0^+ }\int_{\mathbb{R}^n} \int_{\mathbb{R}^n} \frac{\langle y,x-y \rangle \chi_{B_{\delta}(x)}(y) \left(e^{(2n-\mu)u(x)} - e^{(2n-\mu)u(y)}  \right)}{|x-y|^{\mu+2}} dy dx = 0.
\end{equation}
Therefore, by \eqref{c60}-\eqref{c63}, we obtain
\[
\int_{\mathbb{R}^n} \langle x,\nabla v(x) \rangle  e^{\frac{2n-\mu}{2}u(x)} dx = -\frac{\mu \beta_n \alpha}{2}.
\]
By Lemma \ref{c58}, we deduce that
\[
\alpha \left(\alpha - \frac{4n}{2n-\mu}\right) = - \frac{4}{(2n-\mu)\beta_n} \times\frac{\mu \beta_n \alpha}{2}.
\]
Thus, one gets
\[
\alpha = 2.
\]
\end{proof}

\section{Moving sphere}
In this section, we prove our main results by moving spheres method.

It follows from Theorem \ref{c28} that
\[
u(x) = \frac{1}{\beta_n}  \int_{\mathbb{R}^n}\left[ \ln \left(\frac{|y|+1}{|x-y|}\right)\right] v(y) e^{\frac{2n-\mu}{2}u(y)} dy + C.
\]
Clearly $ u$ satisfies the assumptions $ (a)$ and $ (b)$ of Lemma \ref{c25}. Therefore,
\[
u(x) \le C, \quad x\in \mathbb{R}^n,
\]
for some constant $ C>0$. By Lemma \ref{c65}, we obtain the precise decay of $ u$
\begin{equation}\label{c70}
u(x) = -2\ln|x| + O(1),  \quad \text{for } \quad  |x|>>1.
\end{equation}

With these preparations, we are ready to run the method of moving spheres. We consider the Kelvin transformation of $ (u,v)$:
\begin{equation}
\left\{
\begin{aligned}
& p(x) = u\left(\frac{x}{|x|^2}\right) - 2\ln |x|, \\
& q(x) = \frac{1}{|x|^{\mu}}v\left( \frac{x}{|x|^2}\right).
\end{aligned}
\right.
\end{equation}
Then a direct calculation shows that they satisfy the following equation
\begin{equation}\label{c42}
\left\{
\begin{aligned}
& p(x) = \frac{1}{\beta_n}  \int_{\mathbb{R}^n}\left[ \ln \left(\frac{|y|+1}{|x-y|}\right)\right] q(y) e^{\frac{2n-\mu}{2}p(y)} dy + C \\
& q(x)=  \int_{\mathbb{R}^n} \frac{e^{\frac{2n-\mu}{2}p(y)}}{|x-y|^{\mu}} dy
\end{aligned}
\right. \text{for} \  x\neq 0.
\end{equation}
We use the moving spheres method on $ (p,q)$. We define
\begin{equation}
\left\{
\begin{aligned}
& p_{\lam}(x) = p\left(\frac{\lam^2 x}{|x|^2}\right) +2\ln \frac{\lam}{|x|} =u\left(\frac{x}{\lam^2}\right) - 2\ln \lam, \\
& q_{\lam}(x) = \frac{\lam^\mu}{|x|^{\mu}}q\left(\frac{\lam^2 x}{|x|^2}\right) = \frac{1}{\lam^{\mu}}v\left( \frac{x}{\lam^2}\right).
\end{aligned}
\right.
\end{equation}
Let
\[
w_{\lam}(x) = p(x) - p_{\lam}(x), \quad z_{\lam}(x) = q(x) - q_{\lam}(x).
\]
Then we have
\begin{equation}\label{c38}
z_{\lam}(x) = \int_{B_\lam} \left( \frac{1}{|x-y|^\mu} - \frac{1}{|x-\frac{\lam^2 y}{|y|^2}|^\mu}\frac{\lam^\mu}{|y|^\mu}\right) \left( e^{\frac{2n-\mu}{2}p(y)} - e^{\frac{2n-\mu}{2}p_\lam(y)} \right) dy.
\end{equation}

Now, we show that the moving spheres method can be started from infinity.
\begin{pro}\label{c39}
If $ \lam >0 $ is large enough, we have
\[
w_{\lam}(x) \geq 0 \quad \text{and} \quad z_{\lam}(x) \geq 0 \ \text{in} \ B_{\lam}(0)\setminus \{0\}.
\]
\end{pro}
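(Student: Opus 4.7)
The plan is to initiate the moving spheres at $\lambda = +\infty$ by a direct pointwise comparison, exploiting the sharp decay $u(x) = -2\ln|x| + O(1)$ established in \eqref{c70}. Under the Kelvin transform this translates into: $p(x) = u(x/|x|^2) - 2\ln|x|$ is uniformly bounded near the origin (the $-2\ln|x|$ cancels exactly against the leading log-asymptotic of $u(x/|x|^2)$), while $p_\lambda(x) = u(x/\lambda^2) - 2\ln\lambda$ drifts uniformly to $-\infty$ on $B_\lambda$ as $\lambda \to \infty$ since $u$ is continuous at the origin. The only subtle region is the shell where $|x|$ is close to $\lambda$, because $w_\lambda$ vanishes identically on the sphere $|x| = \lambda$.

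My plan is first to prove $w_\lambda \geq 0$ on $B_\lambda \setminus \{0\}$ by a pointwise estimate, then deduce $z_\lambda \geq 0$ from the integral formula \eqref{c38}. Writing
\[
w_\lambda(x) = u\!\left(\tfrac{x}{|x|^2}\right) - u\!\left(\tfrac{x}{\lambda^2}\right) + 2\ln\tfrac{\lambda}{|x|},
\]
I split $B_\lambda\setminus\{0\}$ into two regions. On $\{0 < |x| \leq 1\}$, the argument $x/|x|^2$ lies outside $B_1$, so \eqref{c70} gives $u(x/|x|^2) = 2\ln|x| + O(1)$; combined with the bounded value of $u(x/\lambda^2)$, this yields $w_\lambda(x) = 2\ln\lambda + O(1) > 0$ for $\lambda$ large. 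On $\{1 \leq |x| < \lambda\}$, both $x/|x|^2$ and $x/\lambda^2$ lie in $\bar B_1$ where $u \in C^1$; writing $s := |x|/\lambda \in [1/\lambda, 1)$ and $C_0 := \sup_{\bar B_1}|\nabla u|$, the mean value inequality gives
\[
\bigl|u(x/|x|^2) - u(x/\lambda^2)\bigr| \leq C_0 \cdot \frac{1-s^2}{s\lambda}.
\]
A direct calculation shows that the function $s \mapsto -2s\ln s/(1-s^2)$ is monotone increasing on $(0,1)$, with value $\sim 2\ln\lambda/\lambda$ at $s = 1/\lambda$ and limit $1$ at $s = 1$; hence $-2\ln s \geq C_0(1-s^2)/(s\lambda)$ uniformly for $s \in [1/\lambda, 1)$ once $\lambda > e^{C_0/2}$, giving $w_\lambda \geq 0$.

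With $w_\lambda \geq 0$ in hand, $z_\lambda \geq 0$ follows immediately from \eqref{c38}. The standard Kelvin identity
\[
|y|^2\,\bigl|x - \tfrac{\lambda^2 y}{|y|^2}\bigr|^2 - \lambda^2|x-y|^2 = (|x|^2-\lambda^2)(|y|^2-\lambda^2)
\]
makes the kernel in \eqref{c38} nonnegative whenever $x,y\in B_\lambda$, and the exponential difference $e^{\frac{2n-\mu}{2}p(y)} - e^{\frac{2n-\mu}{2}p_\lambda(y)}$ is nonnegative by monotonicity of the exponential since $w_\lambda(y) \geq 0$. The main obstacle is the uniform control of $w_\lambda$ in the shell near $|x| = \lambda$: both summands on the right are small there, so ensuring the logarithm dominates the mean value remainder requires the sharp decay exponent $\alpha = 2$ from Lemma \ref{c65} together with the $C^1$ regularity of $u$ at the origin inherited from $u \in C^n$.
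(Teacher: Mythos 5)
Your proof is correct, and it takes a genuinely different route through the shell region from the paper's. Both approaches agree on the easy parts: near the origin (your region $0<|x|\leq 1$, the paper's $A_3$), the sharp decay $u(x)=-2\ln|x|+O(1)$ makes $p$ bounded while $p_\lambda$ tends uniformly to $-\infty$; away from the boundary sphere the logarithmic gap $2\ln(\lambda/|x|)$ dominates. The divergence is in the shell near $|x|=\lambda$, where $w_\lambda$ degenerates. The paper handles $A_2=\{\lambda/2\leq|x|\leq\lambda\}$ by a PDE argument: it verifies $-\Delta w_\lambda\geq 0$ there (which requires estimating $y\cdot\nabla u(y)$ and $\Delta u$ near the origin) and invokes the maximum principle with the boundary data $w_\lambda=0$ on $|x|=\lambda$ and $w_\lambda\geq 0$ on $|x|=\lambda/2$; it also treats $z_\lambda$ on $A_1,A_2$ by separate pointwise arguments using the monotonicity of $|x|^{\mu/2}v(x)$ near $0$. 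You avoid all of this with a single elementary mean-value estimate $|u(x/|x|^2)-u(x/\lambda^2)|\leq C_0(1-s^2)/(s\lambda)$ on the region where both arguments lie in $\bar B_1$, combined with the monotonicity of $g(s)=-2s\ln s/(1-s^2)$ (which does hold: $(1+s^2)\ln s+1-s^2<0$ on $(0,1)$ since its third derivative is positive and $h(1)=h'(1)=h''(1)=0$), and then you get $z_\lambda\geq 0$ everywhere at once from the integral representation \eqref{c38} and the Kelvin algebraic identity. Your version is more elementary (no maximum principle, no $\Delta u$ estimates, no separate treatment of $z_\lambda$ on sub-regions) and correctly identifies the needed large-$\lambda$ threshold explicitly as $\lambda>e^{C_0/2}$ with $C_0=\sup_{\bar B_1}|\nabla u|$. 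One small point of precision: you write that on $\{0<|x|\leq 1\}$, \eqref{c70} gives $u(x/|x|^2)=2\ln|x|+O(1)$; strictly, \eqref{c70} is stated for $|x|$ large, so the claim for $|x/|x|^2|$ in a bounded range $[1,1/\delta]$ relies additionally on the continuity of $u$, but this is harmless and the conclusion that $p$ is bounded on $0<|x|\leq 1$ is correct.
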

\begin{proof} We denote
\[
A_1 =\left\{x\in \mathbb{R}^n | R_0 \le |x| \le \frac{\lam}{2}\right\}, \quad A_2 =\left\{x\in \mathbb{R}^n | \frac{\lam}{2} \le |x| \le \lam \right\}
\]
and
\[
A_3= B_{R_0}(0)\setminus \{0\}.
\]

First, if $ x\in A_1$, since $ u$ is continuous at $ 0$, we get
\[
w_\lam(x)= u\left(\frac{x}{|x|^2}\right) - 2\ln |x| - u\left(\frac{x}{\lam^2}\right) + 2\ln \lam \ge 2\ln 2 - o(1)\ge 0
\]
for $ \lam$ and $ R_0$ large enough. Similarly, for $ \lam$ large enough,
\begin{equation}
\begin{aligned}
z_{\lam}(x) & = \frac{1}{|x|^{\mu}}v\left( \frac{x}{|x|^2}\right) -\frac{1}{\lam^{\mu}}v\left( \frac{x}{\lam^2}\right) \\
& = \left(\frac{1}{|x|^\mu} - \frac{1}{\lam^\mu}\right) v\left( \frac{x}{|x|^2}\right) + \frac{1}{\lam^\mu} \left[ v\left( \frac{x}{|x|^2}\right) -v\left( \frac{x}{\lam^2}\right) \right] \\
& \ge \frac{2^\mu -1}{\lam^\mu} (v(0)+ o(1)) - \frac{C}{\lam^\mu} \left(\frac{1}{R_0} - \frac{R_0}{\lam^2}\right) \\
& \ge 0,
\end{aligned}
\end{equation}
where $ C= \max\limits_{x\in B_1} |\nabla v(x)|$ and $ v(0) > 0$.

Next, we consider that $ x\in A_2$. We deduce from $ v(0) > 0$ that
\[
\nabla (|x|^{\frac{\mu}{2}}v(x)) \cdot x = \frac{\mu}{2} |x|^{\frac{\mu}{2}}v(x) + |x|^{\frac{\mu}{2}} \nabla v(x) \cdot x >0
\]
for $ |x|$ small enough. The function $ |x|^{\frac{\mu}{2}}v(x)$ is increasing in the direction of $ x$. Hence,
\[
\left|\frac{x}{|x|^2} \right|^{\frac{\mu}{2}} v\left( \frac{x}{|x|^2}\right) \ge \left|\frac{x}{\lam^2} \right|^{\frac{\mu}{2}} v\left( \frac{x}{\lam^2}\right)
\]
and
\[
q(x)\ge q_{\lam}(x), \quad x\in A_2.
\]
For $ \frac{\lam}{2} \le |x| \le \lam$, a directly calculation shows that
\begin{equation}
\begin{aligned}
- \Delta w_\lam(x) =&  - \Delta_x u\left(\frac{x}{|x|^2}\right) + \frac{2(n-2)}{|x|^2} + \Delta_x u\left(\frac{x}{\lam^2}\right) \\
=& \frac{1}{|x|^2} \left\{ 2(n-2) + 2(n-2) [y \cdot \nabla u(y)]_{y= x /|x|^2} \right\} \\
& +  \frac{1}{|x|^2} \left\{ \frac{|x|^2}{\lam^4} (\Delta u)\left(\frac{x}{\lam^2}\right)- \frac{1}{|x|^2}(\Delta u)\left(\frac{x}{|x|^2}\right) \right\} \\
\ge & 0
\end{aligned}
\end{equation}
for $ \lam$ large enough. From the maximum principle we conclude that
\[
w_\lam(x) \ge 0 \quad \text{in} \quad A_2.
\]

Now, we consider the case $ x \in A_3$. By \eqref{c70}, one has
\[
w_\lam(x)= u\left(\frac{x}{|x|^2}\right) - 2\ln |x| - u\left(\frac{x}{\lam^2}\right) + 2\ln \lam \ge -C +2\ln \lam
\]
for $ |x|$ small enough. Choosing $ \lam $ large enough, one gets
\[
w_\lam(x) \ge 0 \quad \text{in} \quad B_{R_0}\setminus \{0\}.
\]
Since $ |x-y| \le \left|x-\frac{\lam^2 y}{|y|^2}\right|\frac{|y|}{\lam} $, then
\[
\frac{1}{|x-y|^\mu} - \frac{1}{\left|x-\frac{\lam^2 y}{|y|^2}\right|^\mu}\frac{\lam^\mu}{|y|^\mu} \geq 0
\]
for $ x\in B_{R_0}\setminus \{0\}$, $ y\in B_{\lam}\setminus \{0\}$ and $ w_\lam(x) \ge 0$ in $ B_{\lam}\setminus \{0\}$.
We infer from \eqref{c38} that
\[
z_\lam(x) \ge 0 \quad \text{in} \quad B_{R_0}\setminus \{0\}.
\]
This completes the proof of this proposition.
\end{proof}

For fixed $ b\in \mathbb{R}^n$, we define
\begin{equation*}
\begin{split}
&  u_b(x) = u(x+b),\quad v_b(x) = v(x+b)\\
& p_b(x) = u_b\left(\frac{x}{|x|^2}\right) - 2\ln |x|, \quad
q_b(x) = \frac{1}{|x|^{\mu}} v_b\left( \frac{x}{|x|^2}\right),\\
& p_{\lam,b}(x) = p_b\left(\frac{\lam^2 x}{|x|^2}\right) +2\ln \frac{\lam}{|x|} =u_b\left(\frac{x}{\lam^2}\right) - 2\ln \lam, \\
& q_{\lam,b}(x) = \frac{\lam^\mu}{|x|^{\mu}}q_b\left(\frac{\lam^2 x}{|x|^2}\right) = \frac{1}{\lam^{\mu}}v_b\left( \frac{x}{\lam^2}\right), \\
& w_{\lam,b}(x) = p_b(x) - p_{\lam,b}(x), \quad z_{\lam,b}(x) = q_b(x) - q_{\lam,b}(x).
\end{split}
\end{equation*}
Next, we give a technical lemma.

\begin{lemma}\label{c40}(see Lemma 11.2 of \cite{LiyyZhang} or see Lemma 3.3 of \cite{Yu})
(1)Suppose $ v \in C^1(\mathbb{R}^n)$, if for all $ b\in \mathbb{R}^n$ and $ \lam > 0$, the following inequality holds
\[
\frac{1}{|x|^\mu} v_b \left(\frac{x}{|x|^2}\right) - \frac{1}{\lam^\mu} v_b \left(\frac{x}{\lam^2}\right) \geq 0, \ \forall x \in B_\lam \setminus \{0\},
\]
then we have $ v(x) \equiv C$.

(2) Suppose $ u \in C^1(\mathbb{R}^n)$, if for all $ b\in \mathbb{R}^n$ and $ \lam > 0$, the following inequality holds
\[
u_b \left(\frac{x}{|x|^2}\right)-2 \ln |x| - u_b \left(\frac{x}{\lam^2}\right) + 2 \ln \lam \geq 0, \quad \forall x \in B_\lam \setminus \{0\}
\]
then we have $ u(x) \equiv C$.
\end{lemma}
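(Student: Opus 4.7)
The plan is to show, for arbitrary distinct points $y_1,y_2\in\mathbb{R}^n$, that the value of $v$ (respectively $u$) at $y_1$ equals that at $y_2$. Both parts use the same mechanism: I would realize the pair $(y_1,y_2)$ as the images $(x/|x|^2+b,\, x/\lambda^2+b)$ of an admissible triple $(x,\lambda,b)$ with $x\in B_\lambda\setminus\{0\}$, then let the free parameter degenerate so that the scaling factor in the hypothesis tends to $1$, leaving a bare comparison. Interchanging the roles of $y_1,y_2$ in the same construction yields the reverse inequality, forcing equality.

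For part (1), set $d=|y_1-y_2|$ and $e=(y_1-y_2)/d$. For each $t\in(0,1/d)$ I would take
\[
x=t\,e,\qquad \lambda=\frac{t}{\sqrt{1-dt}},\qquad b=y_1-\frac{e}{t}.
\]
Then $|x|=t<\lambda$, and a direct computation gives $x/|x|^2+b=y_1$ and $x/\lambda^2+b=y_2$, because
\[
\frac{x}{|x|^2}-\frac{x}{\lambda^2}=\frac{e(\lambda^2-t^2)}{t\lambda^2}=d\,e=y_1-y_2.
\]
Substituting into the hypothesis gives
\[
v(y_1)\geq \left(\frac{|x|}{\lambda}\right)^{\mu}v(y_2)=(1-dt)^{\mu/2}v(y_2),
\]
and letting $t\to 0^+$ (so that $(1-dt)^{\mu/2}\to 1$) yields $v(y_1)\geq v(y_2)$. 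Running the same construction with $y_1,y_2$ swapped (take $x=-t\,e$, same $\lambda$, and $b=y_2+e/t$) produces $v(y_2)\geq v(y_1)$; hence $v(y_1)=v(y_2)$, so $v$ is constant.

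For part (2), the identical choice of $(x,\lambda,b)$ works. The hypothesis now rearranges to
\[
u(y_1)-u(y_2)\geq 2\ln\frac{|x|}{\lambda}=\ln(1-dt),
\]
whose right-hand side tends to $0$ as $t\to 0^+$; so $u(y_1)\geq u(y_2)$, and the symmetric construction gives the reverse inequality. Therefore $u$ is constant.

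The argument is essentially mechanical once one observes that the relation $|y_1-y_2|=(\lambda^2-|x|^2)/(\lambda^2|x|)$ can be inverted to parametrize admissible triples by $|x|=t$ alone, and that the ratio $|x|/\lambda=\sqrt{1-dt}$ approaches $1$ in the limit $t\to 0^+$. I anticipate no real obstacle: only continuity of $v$ and $u$ at the two fixed points is used, so the $C^1$ hypothesis is not essential (though it is harmless), and no sign assumption on $v$ is needed since the limit of the prefactor is $1$ either way.
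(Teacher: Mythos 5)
Your argument is correct and complete. The paper does not prove this lemma in the text (it only cites Lemma~11.2 of Li--Zhang and Lemma~3.3 of Yang--Yu), so there is no in-paper proof to compare against; what you have written is a valid, self-contained proof in the spirit of those references. The key algebraic identity is checked easily: with $x=te$, $\lambda=t/\sqrt{1-dt}$, $b=y_1-e/t$, one has $x/|x|^2-x/\lambda^2=e(\lambda^2-t^2)/(t\lambda^2)=de$, so the pair $(x/|x|^2+b,\,x/\lambda^2+b)$ equals $(y_1,y_2)$, and $|x|<\lambda$ since $\sqrt{1-dt}<1$. The ratio $|x|/\lambda=\sqrt{1-dt}\to1$ as $t\to0^+$, so the hypothesis degenerates to $v(y_1)\ge v(y_2)$ (resp.\ $u(y_1)\ge u(y_2)$), and the swapped construction gives the reverse inequality; since $y_1,y_2$ are arbitrary, the functions are constant.

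Geometrically, your construction moves the inversion sphere $\partial B_{1/\lambda}(b)$ off to infinity so that it converges to the perpendicular bisector hyperplane of $y_1,y_2$ and the inversion degenerates to reflection (with Jacobian~$1$, matching $(|x|/\lambda)^\mu\to1$), which is exactly the passage from the moving-sphere hypothesis for all $(b,\lambda)$ to the moving-plane hypothesis for all hyperplanes. One small sharpening of your final remark: the argument uses no regularity of $v$ or $u$ at all---not even continuity---because $y_1,y_2$ stay fixed as $t\to0^+$ and the functions are only evaluated at those two points; the $C^1$ hypothesis in the statement is purely for convenience elsewhere.
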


For fixed $ b\in \mathbb{R}^n$, we define
\[
\lam_b = \inf\{\lam > 0 \ | \ w_{\mu,b}(x)\ge 0,  z_{\mu,b}(x) \ge 0 \ \text{in} \ B_\mu \setminus \{0\}, \lam \leq \mu < \infty\}.
\]

\begin{pro}\label{c46}
There exists a vector $ \bar{b}\in \mathbb{R}^n$, such that $ \lam_{\bar{b}} > 0$.
\end{pro}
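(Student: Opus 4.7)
The plan is to argue by contradiction, using Lemma~\ref{c40} as the key input. Suppose that $\lambda_b = 0$ for every $b \in \mathbb{R}^n$. By the definition of $\lambda_b$ as an infimum, combined with the monotonicity built into its defining set (i.e.\ if $\lambda$ is admissible then so is every $\mu \geq \lambda$), this hypothesis is equivalent to the following statement: for every $b \in \mathbb{R}^n$ and every $\mu > 0$,
\[
w_{\mu,b}(x) \geq 0 \quad \text{and} \quad z_{\mu,b}(x) \geq 0 \quad \text{in } B_\mu \setminus \{0\}.
\]
Rewriting these inequalities in terms of $u_b$ and $v_b$ using the definitions of $p_{\mu,b}$ and $q_{\mu,b}$ produces precisely the two quantified inequalities that appear as the hypotheses of parts (1) and (2) of Lemma~\ref{c40}.

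Applying Lemma~\ref{c40}(1) to $v$ and Lemma~\ref{c40}(2) to $u$ then forces $u \equiv C_1$ and $v \equiv C_2$ to be constant functions on $\mathbb{R}^n$. However, any constant $u \equiv C_1$ is incompatible with condition \eqref{c2}, since
\[
\int_{\mathbb{R}^n} e^{\frac{2n-\mu}{2} u(y)}\, dy = e^{\frac{2n-\mu}{2} C_1}\cdot |\mathbb{R}^n| = +\infty.
\]
This contradiction shows that there must exist some $\bar{b} \in \mathbb{R}^n$ with $\lambda_{\bar{b}} > 0$, which is exactly the conclusion of the proposition.

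I do not anticipate a serious technical obstacle in this argument: the heavy lifting has already been done by Proposition~\ref{c39}, which ensures that the defining set for $\lambda_b$ is non-empty (so $\lambda_b$ is a well-defined finite number), and by Lemma~\ref{c40}, which packages the rigidity consequence of the moving-spheres inequality holding for \emph{all} centers and radii. The only point that needs to be written out carefully is the equivalence between "$\lambda_b = 0$" and "$w_{\mu,b}, z_{\mu,b} \geq 0$ on $B_\mu \setminus \{0\}$ for every $\mu > 0$", but this is an immediate consequence of the definition of infimum and the upward-closedness of the admissible set.
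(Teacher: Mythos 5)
Your proof is correct and follows exactly the same route as the paper: assume $\lambda_b=0$ for all $b$, unwind the definition of the infimum to get the moving-spheres inequalities for all centers and radii, invoke Lemma~\ref{c40} to conclude $u$ and $v$ are constant, and contradict the integrability condition in~\eqref{c2}. The only difference is that you spell out the upward-closedness argument that the paper leaves implicit, which is a harmless (and slightly clearer) expansion.
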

\begin{proof} We prove it by contradiction. Suppose on the contrary, then for any $ b\in \mathbb{R}^n$, we have $ \lam_b =0$. By the definition of $ \lam_b$, we get
\[
w_{\lam,b}(x)\ge 0 \quad \text{and} \quad   z_{\lam,b}(x) \ge 0
\]
for any $ \lam > 0$ and $ x\in B_\lam \setminus \{0\}$. Then we infer from Lemma \ref{c40} that $ u(x)\equiv C_1$ and $ v(x)\equiv C_2$. This contradicts $ \int_{\mathbb{R}^n}e^{\frac{2n- \mu}{2}u(y)} dy < \infty$.
\end{proof}

\begin{pro}\label{c47}
Suppose that $ \lam_b >0$ for some $ b\in \mathbb{R}^n$, then we have
\[
w_{\lam_b,b}(x)\equiv 0 \quad \text{and} \quad   z_{\lam_b,b}(x) \equiv 0 \quad \forall x\in B_{\lam_b} \setminus \{0\}.
\]
\end{pro}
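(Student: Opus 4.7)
The plan is the standard moving-spheres contradiction: assume that at $\lambda_b > 0$ one of $w_{\lambda_b,b}, z_{\lambda_b,b}$ fails to vanish identically, and then show that the sphere can be moved slightly past $\lambda_b$ (i.e., the inequalities persist for some smaller radius), violating the infimum property defining $\lambda_b$. Without loss of generality translate so that $b=0$ and abbreviate $\lambda := \lambda_b$.

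First I would establish strict positivity in the interior. At $\lambda$, continuity in the parameter yields $w_\lambda \geq 0$ and $z_\lambda \geq 0$ on $B_\lambda \setminus \{0\}$. Formula \eqref{c38} writes $z_\lambda$ as the integral against the strictly positive kernel $|x-y|^{-\mu} - \lambda^\mu |y|^{-\mu}|x - \lambda^2 y/|y|^2|^{-\mu}$ of the non-negative quantity $e^{\frac{2n-\mu}{2} p} - e^{\frac{2n-\mu}{2} p_\lambda}$; if $w_\lambda \not\equiv 0$ this forces $z_\lambda > 0$ strictly on $B_\lambda \setminus \{0\}$. Subtracting the integral equation \eqref{c42} for $p$ from that for $p_\lambda$ (and changing variables $y \mapsto \lambda^2 y/|y|^2$) yields an analogous representation
\[
w_\lambda(x) = \int_{B_\lambda} L_\lambda(x,y) \Bigl[(q-q_\lambda)(y) e^{\frac{2n-\mu}{2} p(y)} + q_\lambda(y)\bigl(e^{\frac{2n-\mu}{2} p(y)} - e^{\frac{2n-\mu}{2} p_\lambda(y)}\bigr)\Bigr] dy,
\]
where $L_\lambda(x,y) > 0$ is a logarithmic kernel of the same sign structure. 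Both bracketed terms are non-negative, and if $z_\lambda \not\equiv 0$ then $w_\lambda > 0$ strictly on $B_\lambda \setminus \{0\}$. Thus assuming that either of the two functions is nontrivial forces both to be strictly positive throughout $B_\lambda \setminus \{0\}$.

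Second, I would carry out the perturbation. For small $\delta > 0$ set $K_\delta = \overline{B_{\lambda-\delta}}\setminus B_\delta$. Compactness and the previous step give a constant $c(\delta) > 0$ with $w_\lambda, z_\lambda \geq c(\delta)$ on $K_\delta$; uniform continuity of $\mu \mapsto (w_\mu,z_\mu)$ on $K_\delta$ preserves the bounds (with $c(\delta)/2$) for $\mu < \lambda$ with $\lambda - \mu$ small. It then remains to check the thin shell $B_\mu \setminus B_{\lambda-\delta}$ and the punctured ball $B_\delta \setminus \{0\}$. Near the origin, the precise decay \eqref{c70} translates into continuity of $p$ and $q$ at $x=0$, and a direct expansion shows that $w_\mu, z_\mu$ approach strictly positive limits as $x \to 0$ uniformly in $\mu$ close to $\lambda$, so they remain non-negative there. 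In the shell $B_\mu \setminus B_{\lambda-\delta}$, the integral formulae for $w_\mu$ and $z_\mu$ have a non-negative integrand (inherited from the non-negativity of $w_\mu, z_\mu$ on $K_\delta$), and the kernels vanish on $\partial B_\mu$ from the correct side, so that the sign is preserved. Combining the three regions yields $w_\mu, z_\mu \geq 0$ on $B_\mu \setminus \{0\}$ for some $\mu < \lambda_b$, contradicting the definition of $\lambda_b$.

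The main obstacle is the estimate in the boundary shell. Although $w_\lambda, z_\lambda$ are strictly positive in the interior, they vanish on $\partial B_\lambda$, so a naive continuity argument fails near the sphere. What rescues it is a quantitative Hopf-type bound extracted from the integral representation: the strictly positive kernels $K_\lambda(x,y)$ and $L_\lambda(x,y)$ have a strictly positive normal derivative on $\partial B_\lambda$, which yields linear lower bounds of the form $w_\lambda(x), z_\lambda(x) \geq c_0 (\lambda - |x|)$ near $\partial B_\lambda$. This first-order vanishing dominates the $O(\lambda - \mu)$ shift of the boundary when the sphere is decreased, giving the comparison needed to close the argument.
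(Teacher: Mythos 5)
Your overall strategy is the paper's strategy: establish strict positivity of $w_{\lambda_b}$ and $z_{\lambda_b}$ in the punctured ball, appeal to a Hopf-type boundary estimate on $\partial B_{\lambda_b}$, and then show the sphere can be moved slightly inward, contradicting the definition of $\lambda_b$. Your strict-positivity step is a genuine but minor variation: you use the integral representations (\eqref{c38} and its analogue for $w_\lambda$) directly, whereas the paper computes $-\Delta w_{\lambda_0}\ge 0$ from the integral equations and then invokes the strong maximum principle and the classical Hopf lemma; both are valid. Your treatment of the compact annulus $K_\delta$ and of the punctured neighborhood of the origin (via the decay \eqref{c70}, or, as the paper does, via a comparison function and the maximum principle on $B_{\lambda_0/2}\setminus B_r$) is fine in spirit.

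The gap is in the boundary shell $B_\mu\setminus B_{\lambda-\delta}$. As you observe, the kernel-positivity argument there is circular, since the integrand over the shell itself is not yet known to be signed. Your rescue — a Hopf bound $w_\lambda(x)\ge c_0(\lambda-|x|)$ which ``dominates the $O(\lambda-\mu)$ shift'' — does not close quantitatively: on the shell, both the Hopf margin and the perturbation $|w_\mu-w_\lambda|$ are exactly of size $O(\lambda-\mu)$, with unrelated constants. Indeed, on $\partial B_\mu$ one has $w_\mu\equiv 0$ while $w_\lambda\ge c_0(\lambda-\mu)$, so the oscillation of $w_\mu-w_\lambda$ across the shell is forced to be at least $c_0(\lambda-\mu)$; a naive subtraction $w_\mu\ge w_\lambda - C(\lambda-\mu)$ gives nothing useful unless $c_0\ge C$, which you have no reason to expect. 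To make a direct shell estimate work one would instead need a second-order Taylor expansion of $w_\mu$ around $\partial B_\mu$ together with continuity of the radial derivative: Hopf at $\lambda_0$ gives $\partial_r w_{\lambda_0}<0$ on $\partial B_{\lambda_0}$, continuity then yields $\partial_r w_\mu(\mu e)\le -c_1<0$ for $\mu$ close to $\lambda_0$, and the bounded second derivative of $w_\mu$ controls the error in a shell of fixed width $\delta<c_1/C$. The paper avoids this quantitative bookkeeping entirely: it argues by contradiction that if non-negativity of $w_{\lambda_k}$ fails along a sequence $\lambda_k\uparrow\lambda_0$, then the interior minimizers $x_k$ (which exist since $w_{\lambda_k}$ vanishes on $\partial B_{\lambda_k}$ and is bounded below on $B_{\lambda_0/2}$) satisfy $\nabla w_{\lambda_k}(x_k)=0$ and converge to a point $\bar x\in\partial B_{\lambda_0}$ with $\nabla w_{\lambda_0}(\bar x)=0$, directly contradicting the Hopf inequality $\partial_\nu w_{\lambda_0}(\bar x)<0$. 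You should replace your shell estimate with an argument of this type.
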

\begin{proof}
Without loss of generality, we assume $b=0$. Then we define
\[
w_{\lam_0} = w_{\lam_0,0}\quad \text{and} \quad  z_{\lam_0} = z_{\lam_0,0}.
\]

Suppose on the contrary that $ w_{\lam_0}(x) \not\equiv 0$ or $ z_{\lam_0}(x) \not\equiv 0$. By the definition of $ \lam_0$ we deduce that
\begin{equation}\label{c43}
w_{\lam_0}(x) \geq 0 \quad \text{and} \quad z_{\lam_0}(x)\geq 0 \quad \text{in} \quad B_{\lam_0} \setminus \{0\}.
\end{equation}
By a direct calculation, one gets
\begin{equation}\label{c41}
p_\lam(x) = \frac{1}{\beta_n}  \int_{\mathbb{R}^n}\left[ \ln \left(\frac{|y|+\lam^2}{|x-y|}\right)\right] q_\lam(y) e^{\frac{2n-\mu}{2}p_\lam(y)} dy + C - 2\ln \lam.
\end{equation}
Moreover, we deduce from \eqref{c42} and \eqref{c41} that
\[
-\Delta w_{\lam}(x) =\frac{n-2}{\beta_n} \int_{B_{\lam}} \left(\frac{1}{|x-y|^2} - \frac{1}{\left|x - \frac{\lam^2 y}{|y|^2}\right|^2}\right) (q(y) e^{\frac{2n-\mu}{2}p(y)} - q_\lam(y) e^{\frac{2n-\mu}{2}p_\lam(y)}) dy.
\]
By \eqref{c43}, we conclude that
\[
-\Delta w_{\lam_0}(x) \ge 0 \quad \text{in} \quad B_{\lam_0} \setminus \{0\}.
\]
We infer from the maximum principle that
\begin{equation}\label{c44}
w_{\lam_0}(x) > 0\quad \text{in} \quad B_{\lam_0} \setminus \{0\}.
\end{equation}
Substituting the inequality in \eqref{c44} into \eqref{c38}, we obtain
\[
z_{\lam_0}(x) > 0\quad \text{in} \quad B_{\lam_0} \setminus \{0\}.
\]
By the definition of $ \lam_0$, there exists a sequence $ \lam_k < \lam_0$, $ \lam_k \rightarrow \lam_0$, such that
\[
\inf\limits_{x\in B_{\lam_k}\setminus \{0\}} w_{\lam_k}(x) < 0.
\]
We deduce from \eqref{c44} and the Hopf Lemma that
\be\label{c45}
\frac{\ptl w_{\lam_0}}{\ptl \nu} (x) < 0 \quad \text{on} \quad \ptl B_{\lam_0},
\ee
where $ \nu$ is the unit outer normal direction.
\par Claim:  there exists a $ \gamma= \gamma(\lam_0) >0$, such that
\[
w_{\lam_k}(x) \geq \frac{\gamma}{2}, \quad   \forall x\in B_{\frac{\lam_0}{2}} \setminus \{0\}.
\]
Let
\[
\gamma = \min_{\ptl B_{\frac{\lam_0}{2}}} w_{\lam_0}(x) > 0.
\]
We define
\[
h(x) = \gamma - \frac{r^{n-2}}{|x|^{n-2}}\gamma \quad \text{in} \quad B_{\frac{\lam_0}{2}} \setminus B_r
\]
with $ r$ small. Then, $ k(x) = w_{\lam_0}(x) - h(x)$ satisfies
\begin{equation}
\left\{
\begin{aligned}
& - \Delta k(x) = - \Delta w_{\lam_0}(x) \geq 0 \quad \text{in} \quad B_{\frac{\lam_0}{2}} \setminus B_r, \\
& k(x) = w_{\lam_0}(x) > 0 \qquad \text{on} \quad \ptl B_r \\
& k(x)> 0 \qquad \qquad \text{on} \quad \ptl B_{\frac{\lam_0}{2}}.
\nonumber
\end{aligned}
\right.
\end{equation}
Hence, by the maximum principle and letting $ r\rightarrow 0$, one gets $ w_{\lam_0}(x) \ge \gamma$, in $B_{\frac{\lambda_0}{2}}\backslash\{0\}$.  Then
\begin{equation*}
\begin{split}
w_{\lam_k}(x)=&p(x)-p_{\lambda_k}(x)\\
=&w_{\lam_0}(x)+ p_{\lambda_0}(x)-p_{\lambda_k}(x)\\
\ge & \frac{\gamma}{2}
\end{split}
\end{equation*}
provided $\lambda_k$ is close enough to $\lambda_0$. This proves the claim.

On the other hand, since $ \inf\limits_{B_{\lam_k}\setminus \{0\}} w_{\lam_k}(x) < 0$, then we infer from the claim that there exists an $ x_k \in B_{\lam_k}\setminus B_{\frac{\lam_0}{2}} $ such that
\[
w_{\lam_k}(x_k) = \inf_{B_{\lam_k}\setminus \{0\}} w_{\lam_k}(x) < 0.
\]
In particular, we have $ \nabla w_{\lam_k}(x_k) = 0$. We assume that, up to a subsequence, $ x_k \rightarrow \bar{x}$, then we obtain $ \nabla w_{\lam_0}(\bar{x}) =0 $ and $ w_{\lam_0}(\bar{x}) =0$. Hence $ \bar{x} \in \ptl B_{\lam_0}$. However, this contradicts \eqref{c45}.

Therefore, for any sequence $ \lam_k < \lam_0$, $ \lam_k \rightarrow \lam_0$, it is clear that
\[
w_{\lam_k}(x) \ge 0, \quad \text{in} \quad B_{\lam_k}\setminus \{0\}.
\]
It follows from \eqref{c38} that
\[
z_{\lam_k}(x) \ge 0, \quad \text{in} \quad B_{\lam_k}\setminus \{0\}.
\]
This contradicts the definition of $ \lam_0$.
\end{proof}

\begin{pro}\label{c50}
For any $ b \in \mathbb{R}^n$, we have $ \lam_b >0$.
\end{pro}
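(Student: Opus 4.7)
The plan is a contradiction argument. Suppose there exists $b_0 \in \mathbb{R}^n$ with $\lambda_{b_0} = 0$. By definition of the infimum this forces $w_{\lambda, b_0}(x) \geq 0$ and $z_{\lambda, b_0}(x) \geq 0$ throughout $B_\lambda \setminus \{0\}$ for every $\lambda > 0$. The idea is to send $\lambda \to 0^+$ in one of these inequalities and clash with the sharp decay rates of $u$ and $v$ from \eqref{c70} and Theorem \ref{c61}.

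I would work with the $z$-inequality, since the asymptotic $|y|^\mu v(y) \to \beta$ at infinity is very clean. Performing the Kelvin substitution $y = b_0 + x/|x|^2$ (so that $|y - b_0| = 1/|x|$) turns $z_{\lambda, b_0}(x) \geq 0$ on $B_\lambda \setminus \{0\}$ into
\[
|y - b_0|^\mu\, v(y) \;\geq\; \lambda^{-\mu}\, v\!\left(b_0 + \frac{y - b_0}{\lambda^2\,|y - b_0|^2}\right) \qquad \text{for all } |y - b_0| > 1/\lambda.
\]
Fixing $\lambda > 0$ and letting $|y| \to \infty$: by Theorem \ref{c61} together with $|y - b_0|/|y| \to 1$, the left side tends to $\beta$; by continuity of $v$ at $b_0$ (the argument inside $v$ tends to $b_0$), the right side tends to $v(b_0)/\lambda^\mu$. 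Hence $\beta \geq v(b_0)/\lambda^\mu$ for every $\lambda > 0$. Since $v(b_0) > 0$ (the integrand defining $v(b_0)$ is strictly positive almost everywhere), letting $\lambda \to 0^+$ forces $\beta = +\infty$, contradicting the finiteness guaranteed by \eqref{c2}.

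The only delicate point is verifying continuity of $v$ at $b_0$. This is a routine dominated-convergence argument: Lemma \ref{c25} gives $e^{(2n-\mu)u/2}$ bounded, hypothesis (B) puts it in $L^1$, and the Riesz kernel $|\cdot|^{-\mu}$ is locally integrable for $\mu < n$, so one may pass the limit inside the integral. Alternatively, an essentially identical contradiction can be derived from the $w$-inequality using only \eqref{c70} and continuity of $u$ at $b_0$, which bypasses the need for continuity of $v$ altogether. Either way, the substantive inputs are the sharp asymptotics of Section 2; no new analytical machinery is required, and the main (minor) obstacle is simply justifying the interchange of limit and integral for $v$.
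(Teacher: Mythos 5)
Your argument is correct and it is a genuinely different route from the paper's. The paper first invokes Propositions~\ref{c46} and~\ref{c47} to produce a center $\bar b$ at which the sphere stops with equality, reads off the exact limit $\lim_{|x|\to 0}\bigl(u_{\bar b}(x/|x|^2)-2\ln|x|\bigr)=u_{\bar b}(0)-2\ln\lam_{\bar b}$, and then compares this finite number with the $\liminf$ bound $u_b(0)-2\ln\lam$ coming from the assumption $\lam_b=0$; letting $\lam\to 0$ gives the contradiction. You instead work directly with the $z$-inequality, pass to the Kelvin variable, and play the sharp asymptotic $|x|^\mu v(x)\to\beta$ of Theorem~\ref{c61} against the continuity and strict positivity of $v$ at $b_0$, getting $\beta\ge v(b_0)/\lam^\mu$ for all $\lam>0$. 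Your version is more self-contained — it does not need the equality case at $\bar b$ at all — and the needed facts (continuity of $v$ via the $L^1\cap L^\infty$ bound on $e^{\frac{2n-\mu}{2}u}$ and local integrability of the Riesz kernel, and $v(b_0)>0$ from the strictly positive integrand) are routine, as you note. Your alternative $w$-route is also valid: with \eqref{c70} the quantity $u_b(x/|x|^2)-2\ln|x|$ stays bounded as $|x|\to 0$, while the lower bound $u_b(0)-2\ln\lam$ blows up as $\lam\to 0$; this is in fact closer in spirit to the paper's argument but avoids invoking Propositions~\ref{c46}--\ref{c47} by replacing the exact limit at $\bar b$ with mere boundedness from the decay estimate. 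Either way, the essential input is the sharp decay established in Section~2, and your reduction is clean.
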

\begin{proof} By Proposition \ref{c46} and Proposition \ref{c47}, there exists a vector $ \bar{b}\in \mathbb{R}^n$, such that $ \lam_{\bar{b}} > 0$,
\[
w_{\lam_{\bar{b}},{\bar{b}}}(x)\equiv 0 \quad \text{and} \quad   z_{\lam_{\bar{b}},{\bar{b}}}(x) \equiv 0 \quad x\in B_{\lam_{\bar{b}}} \setminus \{0\}.
\]
That is
\[
u_{\bar{b}} \left(\frac{x}{|x|^2}\right)-2 \ln |x| - u_{\bar{b}} \left(\frac{x}{\lam_{\bar{b}}^2}\right) + 2 \ln \lam_{\bar{b}} \equiv 0, \quad x \in B_{\lam_{\bar{b}} } \setminus \{0\}.
\]
Letting $ |x| \rightarrow 0$, we obtain
\be\label{c48}
\lim_{|x| \rightarrow 0} \left( u_{\bar{b}} \left(\frac{x}{|x|^2}\right)-2 \ln |x| \right) = u_{\bar{b}}(0) - 2 \ln \lam_{\bar{b}}.
\ee

Suppose on the contrary that there exists a vector $ b \in \mathbb{R}^n$ such that $ \lam_b = 0$, then we obtain
\[
u_b \left(\frac{x}{|x|^2}\right)-2 \ln |x| - u_b \left(\frac{x}{\lam^2}\right) + 2 \ln \lam \geq 0
\]
for all $ \lam > 0$ and $ x \in B_\lam \setminus \{0\}$. Fix $ \lam$ and $ |x| \rightarrow 0$, we infer
\be\label{c49}
\liminf_{|x| \rightarrow 0} \left(u_b \left(\frac{x}{|x|^2}\right)-2 \ln |x| \right) \ge u_b(0) -2\ln \lam.
\ee
We derive from \eqref{c48} and \eqref{c49} that
\[
u_{\bar{b}}(0) - 2 \ln \lam_{\bar{b}}  \ge u_b(0) -2\ln \lam,
\]
this is a contradiction for small $ \lam$.
\end{proof}

\begin{pro}\label{c51}
For all $ b \in \mathbb{R}^n $, we have $ \lam_b > 0$, $ w_{\lam_b,b} \equiv 0$ and $ z_{\lam_b,b} \equiv 0$ in $ B_{\lam_b}\setminus \{0\}$.
\end{pro}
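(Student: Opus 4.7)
The plan is to read Proposition \ref{c51} as a packaging of the three preceding propositions rather than as a new statement requiring fresh work. Proposition \ref{c46} produced at least one $\bar b$ with $\lambda_{\bar b}>0$, Proposition \ref{c47} showed that whenever $\lambda_b>0$ the corresponding $w_{\lambda_b,b}$ and $z_{\lambda_b,b}$ vanish identically in $B_{\lambda_b}\setminus\{0\}$, and Proposition \ref{c50} upgraded the existential statement of \ref{c46} to a universal one by a contradiction argument based on \eqref{c48}–\eqref{c49}. The remaining task is just to combine these.

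Concretely, the plan is as follows. First, invoke Proposition \ref{c50} to obtain $\lambda_b>0$ for every $b\in\mathbb R^n$. Second, for each such $b$, apply Proposition \ref{c47}. The latter was written for $b=0$, but its proof is translation-invariant: replacing $(u,v)$ by $(u_b,v_b)$ throughout, the integral identities \eqref{c42} and \eqref{c41} hold for $(p_b,q_b,p_{\lambda,b},q_{\lambda,b})$ without change, the representation \eqref{c38} for $z_{\lambda,b}$ is of the same form, and so the expression
\[
-\Delta w_{\lambda,b}(x)=\frac{n-2}{\beta_n}\int_{B_\lambda}\left(\frac{1}{|x-y|^2}-\frac{1}{\left|x-\frac{\lambda^2y}{|y|^2}\right|^2}\right)\bigl(q_b(y)e^{\frac{2n-\mu}{2}p_b(y)}-q_{\lambda,b}(y)e^{\frac{2n-\mu}{2}p_{\lambda,b}(y)}\bigr)dy
\]
behaves exactly as in the unshifted case. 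The maximum principle, Hopf lemma, and barrier argument from the proof of Proposition \ref{c47} then run verbatim on $B_{\lambda_b}\setminus\{0\}$ to yield $w_{\lambda_b,b}\equiv0$ and $z_{\lambda_b,b}\equiv0$.

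I do not anticipate any genuine obstacle here: the substantive analytic content (the maximum principle step, the Hopf lemma step, the barrier $h(x)=\gamma-\gamma r^{n-2}/|x|^{n-2}$, and the contradiction against the sign of $\partial w_{\lambda_0}/\partial\nu$) has already been carried out. The only point worth stating explicitly in the write-up is that translating the independent variable $b\mapsto b$ leaves both the hypothesis \eqref{c2} and the decay \eqref{c70} invariant, so the earlier propositions really do apply uniformly in $b$. Once that observation is recorded, the proof of Proposition \ref{c51} reduces to one sentence quoting Propositions \ref{c50} and \ref{c47}.
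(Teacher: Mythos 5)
Your proposal matches the paper's proof exactly: the paper states in one line that Proposition \ref{c51} is a direct consequence of Propositions \ref{c47} and \ref{c50}. Your additional remark that Proposition \ref{c47} applies uniformly in $b$ by translation invariance is a harmless elaboration of what the paper takes for granted.
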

\begin{proof} This is a direct consequence of Proposition \ref{c47}
and Proposition \ref{c50}.
\end{proof}

\begin{proof}[Proof of Theorem \ref{c4}]
Let
\[
f(x) = e^{u(x)}.
\]
Then it follows from Proposition \ref{c51} that
\[
f(x) = \frac{1}{\lam_b^2 |x-b|^2} f \left(\frac{x-b}{\lam_b^2|x-b|^2}+b\right).
\]
Let
\[
A:= \lim_{|x|\rightarrow \infty} |x|^2 f(x) = \frac{f(b)}{\lam_b^2} = \frac{f(0)}{\lam_0^2},
\]
where $ A > 0$. We first assume that $ A=1$. Since
\[
f(x) =\frac{1}{\lam_0^2|x|^2} f \left(\frac{ x}{\lam_0^2|x|^2}\right) = \frac{1}{\lam_b^2 |x-b|^2} f \left(\frac{x-b}{\lam_b^2|x-b|^2}+b\right),
\]
then one has
\be\label{c52}
f(x) =\frac{1}{\lam_0^2|x|^2} \left[f(0)+ \frac{ x\cdot \nabla f(0)}{\lam_0^2|x|^2} + o\left(\frac{1}{|x|}\right)\right]
\ee
and
\be\label{c53}
f(x) =\frac{1}{\lam_b^2 |x-b|^2} \left[f(b)+  \frac{(x-b)\cdot \nabla f(b)}{\lam_b^2 |x-b|^2} + o\left(\frac{1}{|x-b|}\right)\right]
\ee
as $ |x|\rightarrow \infty$. It follows from equation \eqref{c52} and \eqref{c53} that
\[
\frac{\ptl f(b)}{\ptl x_i} f(b)^{-2} = \frac{\ptl f(0)}{\ptl x_i} f(0)^{-2} - 2b_i
\]
and
\[
(f^{-1})_i(b) = 2b_i + (f^{-1})_i(0) = \frac{\ptl}{\ptl b_i}( |b|^2 + \nabla f^{-1}(0) \cdot b).
\]
Therefore,
\[
f(b) = \frac{1}{|b-d_0|^2 + l}
\]
and
\[
u(b) = \ln \left( \frac{1}{|b-d_0|^2 + l} \right),
\]
where $ l$ is a constant. Finally, if we don't assume $ A =1$, then
\[
u(x)= \ln  \frac{C_1(\eps)}{|x-x_0|^2 + \eps^2}.
\]
This completes the proof of Theorem \ref{c4}.
\end{proof}

\section{Acknowledgments.}
This work is partially supported by National Natural Science Foundation of China 12141105.

\bibliographystyle{plain}
\bibliography{Choquard}

\begin{thebibliography}{10}

\bibitem{Branson1985}
T.~P. Branson.
\newblock Differential operators canonically associated to a conformal
  structure.
\newblock {\em Mathematica {S}candinavica}, pages 293--345, 1985.

\bibitem{BreMer}
H.~Brezis and F.~Merle.
\newblock Uniform estimates and blow-up behavior for solutions of {$-\Delta
  u=V(x)e^u$} in two dimensions.
\newblock {\em Comm. Partial Differential Equations}, 16(8-9):1223--1253, 1991.

\bibitem{ChenLi91}
W.~Chen and C.~Li.
\newblock Classification of solutions of some nonlinear elliptic equations.
\newblock {\em Duke Math. J.}, 63(3):615--622, 1991.

\bibitem{ChenLi09}
W.~Chen and C.~Li.
\newblock An integral system and the {L}ane-{E}mden conjecture.
\newblock {\em Discrete Contin. Dyn. Syst.}, 24(4):1167--1184, 2009.

\bibitem{ChenLiOu}
W.~Chen, C.~Li, and B.~Ou.
\newblock Classification of solutions for an integral equation.
\newblock {\em Comm. Pure Appl. Math.}, 59(3):330--343, 2006.

\bibitem{ChenLiMa}
W.~Chen, Y.~Li, and P.~Ma.
\newblock {\em The fractional {L}aplacian}.
\newblock World Scientific Publishing Co. Pte. Ltd., Hackensack, NJ, [2020]
  \copyright 2020.

\bibitem{DaiLiuQin}
W.~Dai, Z.~Liu, and G.~Qin.
\newblock Classification of nonnegative solutions to static
  {S}chr\"{o}dinger-{H}artree-{M}axwell type equations.
\newblock {\em SIAM J. Math. Anal.}, 53(2):1379--1410, 2021.

\bibitem{DuYang19}
L.~Du and M.~Yang.
\newblock Uniqueness and nondegeneracy of solutions for a critical nonlocal
  equation.
\newblock {\em Discrete Contin. Dyn. Syst.}, 39(10):5847--5866, 2019.

\bibitem{GaoYang}
F.~Gao, V.~Moroz, M.~Yang, and S.~Zhao.
\newblock Construction of infinitely many solutions for a critical {C}hoquard
  equation via local {P}oho\v{z}aev identities.
\newblock {\em Calc. Var. Partial Differential Equations}, 61(6):Paper No. 222,
  47, 2022.

\bibitem{GT}
D.~Gilbarg and N.~S. Trudinger.
\newblock {\em Elliptic partial differential equations of second order}, volume
  224 of {\em Grundlehren der mathematischen Wissenschaften [Fundamental
  Principles of Mathematical Sciences]}.
\newblock Springer-Verlag, Berlin, second edition, 1983.

\bibitem{GJMS1992}
C.~Graham, R.~Jenne, L.~Mason, and G.~Sparling.
\newblock Conformally invariant powers of the {L}aplacian, {I}: {E}xistence.
\newblock {\em Journal of the {L}ondon {M}athematical {S}ociety},
  2(3):557--565, 1992.

\bibitem{GuoHuPeng}
L.~Guo, T.~Hu, S.~Peng, and W.~Shuai.
\newblock Existence and uniqueness of solutions for {C}hoquard equation
  involving {H}ardy-{L}ittlewood-{S}obolev critical exponent.
\newblock {\em Calc. Var. Partial Differential Equations}, 58(4):Paper No. 128,
  34, 2019.

\bibitem{HuangLi}
G.~Huang and C.~Li.
\newblock A {L}iouville theorem for high order degenerate elliptic equations.
\newblock {\em J. Differential Equations}, 258(4):1229--1251, 2015.

\bibitem{HuangNiu}
G.~Huang and Y.~Niu.
\newblock Classification of solutions for some mixed order elliptic system.
\newblock 2022.

\bibitem{LiM1}
M.~Li.
\newblock A liouville-type theorem in conformally invariant equations.
\newblock 2023.

\bibitem{LiyyZhang}
Y.~Li and L.~Zhang.
\newblock Liouville-type theorems and {H}arnack-type inequalities for
  semilinear elliptic equations.
\newblock {\em J. Anal. Math.}, 90:27--87, 2003.

\bibitem{Lieb}
E.~H. Lieb.
\newblock Existence and uniqueness of the minimizing solution of {C}hoquard's
  nonlinear equation.
\newblock {\em Studies in Appl. Math.}, 57(2):93--105, 1976/77.

\bibitem{Lin98}
C.-S. Lin.
\newblock A classification of solutions of a conformally invariant fourth order
  equation in {${\bf R}^n$}.
\newblock {\em Comment. Math. Helv.}, 73(2):206--231, 1998.

\bibitem{Lions}
P.-L. Lions.
\newblock The {C}hoquard equation and related questions.
\newblock {\em Nonlinear Anal.}, 4(6):1063--1072, 1980.

\bibitem{LiuYangYu}
F.~Liu, J.~Yang, and X.~Yu.
\newblock Positive solutions to multi-critical elliptic problems.
\newblock {\em Ann. Mat. Pura Appl. (4)}, 202(2):851--875, 2023.

\bibitem{MaZhao}
L.~Ma and L.~Zhao.
\newblock Classification of positive solitary solutions of the nonlinear
  {C}hoquard equation.
\newblock {\em Arch. Ration. Mech. Anal.}, 195(2):455--467, 2010.

\bibitem{MVVJ}
V.~Moroz and J.~Van~Schaftingen.
\newblock Groundstates of nonlinear {C}hoquard equations: existence,
  qualitative properties and decay asymptotics.
\newblock {\em J. Funct. Anal.}, 265(2):153--184, 2013.

\bibitem{Pekar}
S.~Pekar.
\newblock {\em Untersuchungen {\"u}ber die Elektronentheorie der Kristalle}.
\newblock De Gruyter, 1954.

\bibitem{XuWei}
J.~Wei and X.~Xu.
\newblock Classification of solutions of higher order conformally invariant
  equations.
\newblock {\em Math. Ann.}, 313(2):207--228, 1999.

\bibitem{Xu05}
X.~Xu.
\newblock Uniqueness and non-existence theorems for conformally invariant
  equations.
\newblock {\em J. Funct. Anal.}, 222(1):1--28, 2005.

\bibitem{Yu}
J.~Yang and X.~Yu.
\newblock Classification of solutions for critical choquard equation in
  dimension two.
\newblock {\em Available at SSRN 4405840}.

\bibitem{Yu22}
X.~Yu.
\newblock Classification of solutions for some elliptic system.
\newblock {\em Calc. Var. Partial Differential Equations}, 61(4):Paper No. 151,
  37, 2022.

\bibitem{Zhu}
N.~Zhu.
\newblock Classification of solutions of a conformally invariant third order
  equation in {$\Bbb R^3$}.
\newblock {\em Comm. Partial Differential Equations}, 29(11-12):1755--1782,
  2004.

\end{thebibliography}

\end{document}